\newtheorem{theorem}{Theorem}[section]
\newtheorem{corollary}[theorem]{Corollary}
\newtheorem{proposition}[theorem]{Proposition}
\newtheorem{lemma}[theorem]{Lemma}
\theoremstyle{definition}
\newtheorem{remark}[theorem]{Remark}
\newcommand{\infline}{\mathbb{C}P^1(\infty)}
\newcommand{\highdeg}{S}
\def\R{\mathbb R}
\def\C{\mathbb C}
\def\Z{\mathbb Z}
\newcommand{\CC}{{\mathbb C}}
\newcommand{\RR}{{\mathbb R}}
\newcommand{\ZZ}{{\mathbb Z}}
\newcommand{\cal}{\mathcal}
\newcommand{\Ind}{\mathrm{index}}
\newcommand{\CZ}{\operatorname{CZ}}
\newcommand{\e}{\operatorname{e}}
\begin{document}

\title{Symplectic embeddings of polydisks}

\author{R. Hind}
\author{S. Lisi}

\date{\today}

\maketitle

\section{Introduction and notation}

In this note, we obtain new obstructions to symplectic embeddings of a
product of disks (a polydisk) into a 4-dimensional ball.

Let us equip $\R^4$ with coordinates $x_1,y_1,x_2,y_2$ and its standard symplectic form $\omega = dx_1 \wedge dy_1 + dx_2 \wedge dy_2$. The polydisk $P(r,s)$ is defined to be $$P(r,s) = \{ \pi(x_1^2 + y_1^2) \le r, \pi(x_2^2 + y_2^2) \le s\}$$ with symplectic form induced from $\R^4$. The ball of capacity $a$ (i.e.~ the ball of radius
$\sqrt{\frac{a}{\pi}}$) is $$B(a) = \{\pi( x_1^2 + y_1^2 + x_2^2 + y_2^2) \le a\}$$ again with the induced symplectic form.

Given $r,s$, the embedding problem for the polydisk $P(r,s)$ into a ball is to find the infimum of the set of $a$ such that there exists a symplectic embedding $P(r,s) \to B(a)$.

There are two well known sequences of symplectic capacities giving obstructions to symplectic embeddings in 4 dimensions, namely the Ekeland-Hofer capacities \cite{ekehof} (which are actually defined in all dimensions) and the Embedded Contact Homology (ECH) capacities introduced by Hutchings \cite{HutchingsCapacities}. In the case when $(r,s)=(1,2)$ both the Ekeland-Hofer and ECH capacities give the obstruction $a \ge 2$. As $P(1,2)$ and $B(2)$ have the same volume this restriction also follows since symplectic embeddings preserve volume.

Our main theorem solves this embedding problem in the case when $(r,s)=(1,2)$.

\begin{theorem} \label{main}
	There exists a symplectic embedding
	\[ P(1,2) \coloneqq D(1) \times D(2) \hookrightarrow B( a )\]
if and only if $a \ge 3$.
\end{theorem}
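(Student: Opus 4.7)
The statement has two halves: the \emph{construction} ($a \ge 3$ suffices) and the \emph{obstruction} ($a < 3$ is impossible). Only the obstruction is substantial, since both ECH and Ekeland--Hofer are known to yield only $a \ge 2$. For the construction I would exhibit, for every $a > 3$, an explicit symplectic embedding $P(1,2) \hookrightarrow B(a)$ by symplectic folding in the style of Lalonde--McDuff and Traynor: split $D(2)$ across a diameter into two half-disks of area roughly $1$ each, lift one half by a Hamiltonian isotopy supported in the complementary $D(1)$-direction, and fit the resulting piecewise image inside $B(3+\epsilon)$. Taking $\epsilon \to 0$ gives the embedding into $B(3)$ (or one argues that the infimum is attained in the closed ball).

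For the obstruction I would compactify $B(a) \subset \CC P^2$ as the complement of a line $\ell_\infty$ of Fubini--Study area $a$. Given a hypothetical embedding $\phi : P(1,2) \hookrightarrow B(a)$, replace $\phi(P(1,2))$ by a smooth contact-type approximation $U_\delta$ whose Reeb flow on $\partial U_\delta$ approximates the two-parameter family of closed orbits on the corner torus of $\partial P(1,2)$, with action spectrum generated by $1$ and $2$. This presents $(\CC P^2, \omega_a)$ as a split symplectic manifold consisting of a filling (essentially a smoothed $P(1,2)$) and a cobordism containing $\ell_\infty$.

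The core of the proof is to study a moduli space of $J$-holomorphic spheres of degree $1$ in $\CC P^2$ carrying two marked points constrained to lie in $\phi(U_\delta)$. Classically this moduli space is a single transverse point. Stretching the neck along $\partial U_\delta$ and invoking SFT compactness, the unique line converges to a holomorphic building whose distinguished component in the cobordism crosses $\ell_\infty$ once, hence has a positive puncture at $\ell_\infty$ contributing area $a$, and negative ends asymptotic to Reeb orbits on $\partial U_\delta$ whose total action is at most $a$. Combining the index formula for such punctured genus-zero curves with the arithmetic of the Reeb action spectrum $\ZZ_{\ge 0}\langle 1,2\rangle$, I would show that any admissible configuration of negative asymptotics has total action at least $3$, which forces $a \ge 3$.

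The main difficulty will lie in the last step. The boundary $\partial P(1,2)$ is not smooth, and the Reeb dynamics on $\partial U_\delta$ degenerate as $\delta \to 0$, so one must control both the asymptotics of the SFT limit and transversality of the relevant moduli spaces uniformly in $\delta$. More importantly, ECH-style counts only rule out $a < 2$, so one must rule out SFT limits whose cobordism piece has several negative ends with small total action (for instance, three asymptotics of action close to $1$ with the right index would only reproduce the ECH bound). The essential new input is therefore a refined index or intersection-theoretic argument showing that the geometry of the polydisk filling does not accommodate such configurations; I expect this to rely on automatic transversality for low-index genus-zero curves together with positivity of intersection with $\ell_\infty$ and with the distinguished component in the filling carrying the two marked points.
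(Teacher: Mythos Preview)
Your sufficiency argument via folding is unnecessary: $P(1,2) \subset B(3)$ by inclusion, so the construction is immediate. Folding only improves on inclusion for $P(1,s)$ with $s>2$.

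For the obstruction, the gap is exactly where you locate it, and it is not a matter of bookkeeping. Stretching along a smoothed boundary $\partial U_\delta$ of the polydisk and analysing a degree~$1$ sphere through two interior points is essentially the mechanism behind the ECH obstruction, and it cannot be pushed past $a \ge 2$: nothing prevents the limiting building from consisting of an outside plane with a single negative end on a Reeb orbit of action close to~$2$, glued to an inside disk of area~$2$ carrying both marked points (or two action-$1$ ends glued to two disks). No index or positivity argument on that building excludes such configurations. The paper's proof departs from your outline in four essential respects, each of which is indispensable:
\begin{enumerate}
\item The neck is stretched along the \emph{Lagrangian torus} $L = \partial D(1)\times\partial D(2)$, not along $\partial P(1,2)$; the limiting curves live in $X\setminus L$ and $T^*L$ with ends on closed geodesics in classes $(k,l)\in H_1(L)$.
\item The ball $B(1)\subset P(1,2)$ is \emph{blown up}, producing $X=\CC P^2(R)\,\#\,\overline{\CC P^2(1)}$ with exceptional divisor $E$. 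Spheres in the fiber class $[\CC P^1(\infty)]-[E]$ foliate $X$, and after stretching this becomes a finite energy foliation of $X\setminus L$ whose non-closed leaves are planes asymptotic \emph{only} to $(\pm1,0)$ geodesics. The hypothesis $R<3$ enters precisely here: it rules out planes asymptotic to $(\pm1,\pm1)$ geodesics in the foliation.
\item One does \emph{not} use lines. One takes spheres in the class $d[\CC P^1(\infty)]-(d-1)[E]$ through $2d$ points placed near $L$, and lets $d\to\infty$.
\item Intersection theory of the limiting building against the foliation (Siefring's extended intersection number) forces the distinguished component $f_0$ in $X\setminus L$ to have all its ends on $(-k,0)$ geodesics and at least roughly $2d$ of them; the area formula then yields $0<\mathrm{area}(f_0)\le d(R-2)+1-d$, i.e.\ $R>3-\tfrac{1}{d}$, and sending $d\to\infty$ gives the contradiction.
\end{enumerate}
None of the Lagrangian stretching, the blow-up, the finite energy foliation, or the passage $d\to\infty$ appears in your proposal; together they are the entire mechanism that separates $a\ge 3$ from $a\ge 2$. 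Your hoped-for ``refined index or intersection-theoretic argument'' is, in effect, the whole paper.
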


The sufficiency of $a \ge 3$ follows since by inclusion $P(1,2) \subset B(3)$. In general we have $P(r,s) \subset B(r+s)$. Hence the theorem can be rephrased as saying that for $P(1,2)$ the inclusion map gives the optimal embedding. The necessity of $a \ge 3$ implies that for this particular embedding problem neither the Ekeland-Hofer nor ECH capacities give a sharp obstruction. We contrast this with the case of ellipsoid embeddings into a ball when the ECH capacities give a complete list of obstructions, see \cite{mcduff}.
Our obstruction does not
come from a symplectic capacity, but instead from pseudoholomorphic
foliations, thus the techniques seem to be special to dimension $4$.

The embedding problem is discussed at length by Schlenk in the book \cite{SchlenkBook}, and in particular the technique of symplectic folding. One consequence of this is the following.

\begin{theorem} \label{felix} (\cite{SchlenkBook}, Proposition $4.3.9$.)
If $s>2$ then there exists a symplectic embedding $P(1,s) \to B(a)$ for all $a>2+\frac{s}{2}$.
\end{theorem}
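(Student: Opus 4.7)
The plan is to construct the embedding using the symplectic folding technique due to Lalonde--McDuff and developed systematically by Schlenk. The idea is to split $P(1,s)$ along a slice $\{x_2 = \textrm{const}\}$ into a ``base'' piece of capacity roughly $2$ and a ``tail'' of area roughly $s-2$, and then to fold the tail over the base so that the combined image fits inside a ball of capacity $2+s/2+\epsilon$.

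Concretely, first use an area-preserving diffeomorphism of $\R^2_{x_2,y_2}$ to identify $D(s)$, up to a set of measure zero, with a long thin region $\Omega$ that approximates the rectangle $[0,s]\times[0,1]$. Taking the product with the identity in the first two coordinates gives a symplectic embedding of the interior of $P(1,s)$ into $D(1)\times\Omega$. Split $\Omega$ into $\Omega_- = \Omega\cap\{x_2 \le 2\}$ and $\Omega_+ = \Omega\cap\{x_2\ge 2\}$. Next, apply a Hamiltonian isotopy of $\R^4$ generated by a function of the form $H(x_1,y_1,x_2,y_2) = \phi(x_2)\,K(x_1,y_1)$, where $\phi$ is a smooth cutoff supported on the $x_2$-range of $\Omega_+$ and $K$ is a smoothed indicator of $D(1)$. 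This isotopy translates points of $D(1)\times\Omega_+$ upward in $(x_1,y_1)$ by an amount roughly equal to $\phi'(x_2)$, while leaving $D(1)\times\Omega_-$ essentially fixed. Following this with an area-preserving rearrangement of $\Omega_+$ that folds it back over the $x_2$-range of $\Omega_-$ (now at higher $x_1$), the total image fits inside the round ball of capacity $2+s/2+\epsilon$.

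The main obstacle, in my view, is to choose the cutoff $\phi$, the bump $K$, and the rearrangement of $D(s)$ carefully so that simultaneously (i) the lifting isotopy is genuinely symplectic on the relevant open set, (ii) its image does not self-intersect after folding, and (iii) the resulting combined region is contained in a round ball of the claimed capacity. This delicate balancing of parameters is precisely the content of Schlenk's construction in Section $4.3$ of \cite{SchlenkBook}, so here we content ourselves with the outline above and refer the reader there for full details.
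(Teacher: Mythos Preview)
The paper does not prove this theorem at all; it simply quotes Proposition~4.3.9 of \cite{SchlenkBook} and moves on. So there is no ``paper's own proof'' to compare against --- your outline of symplectic folding is exactly the method Schlenk uses, and in that sense your approach matches the source the paper cites.

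That said, there is a parameter error in your sketch. You split $\Omega \approx [0,s]\times[0,1]$ at $x_2 = 2$, giving a base of area $2$ and a tail of area $s-2$. With that asymmetric split, lifting the tail by $1$ in the first factor and folding it back over the base lands you in (roughly) $P(2,\max(2,s-2))$, which for $s$ near $2$ only gives a ball of capacity near $4$, not near $3 = 2 + s/2$; and for $s>4$ the tail does not even fit over the base after a single fold. The construction that actually yields the bound $2 + s/2$ splits $D(s)$ symmetrically into two pieces of area $s/2$: after lifting one half by $1$ and folding, the image is approximately $P(2,s/2) \subset B(2+s/2)$. So the plan is right, but the cut should be at $x_2 = s/2$, not $x_2 = 2$.
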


When $s>2$ we have $2+\frac{s}{2} < 1+s$ and so this result implies that for $P(1,s)$ with $s>2$ the inclusion map is never optimal.

Finally we remark that when $s$ increases even the embeddings in Theorem \ref{felix} are far from optimal. Indeed, Theorem $3$ in \cite{SchlenkBook} implies that by taking $s$ sufficiently large we can find embeddings $P(1,s) \to B(a)$ whose image occupies an arbitrarily large proportion of the volume.


\subsection{Organization and outline of the proof.} \label{organize}

Define $\CC P^2(R)$ to be
complex projective space equipped with the Fubini-Study symplectic form
scaled so the area of the line is $R$, and denote by $\CC
P^1(\infty)$ the line at infinity. Then $\CC P^2(R)$ is the union of the open ball of capacity $R$ and $\CC
P^1(\infty)$. Our goal is to show that for any $R < 3$,
there does not exist an embedding $P(1,2) \hookrightarrow \CC P^2(R) \setminus \CC
P^1(\infty)$.

Our proof will be by contradiction.  Assume such an embedding exists.
Let $L$ be the Lagrangian torus at the ``corner'' of the bidisk:
\[
	L = \partial D(1) \times \partial D(2)
	\subset P(1,2) \subset \CC P^2(R) \setminus \CC P^1(\infty)
	\subset \CC P^2(R).
\]
Note that the (closed) ball of capacity $1$, centred at $(0,0)$
sits in the (closed) bidisk $D(1) \times D(2)$
and does not intersect $L$. Let $X$ be the symplectic manifold obtained
by blowing up this ball of capacity $1$:
$X = \CC P^2 (R) \sharp \overline{\CC P^2(1)}$. Let $\omega_R$ be the symplectic form on $X$.
Denote the exceptional divisor by $E$.  Note that $E$ is a symplectic
sphere of area $1$ and that $X$ is the non-trivial $\CC P^1$ bundle
over $\CC P^1$.

In section \ref{closed} we describe two classes of holomorphic curves in $X$, namely curves in the class of a fiber which foliate $X$, and sections $S$ of high degree $d = S \bullet \CC P^1(\infty)$. Such curves exist for any tame almost-complex structure on $X$.

Next we will stretch
the neck along the boundary of a small tubular neighbourhood of $L$ as described in \cite{BEHWZ}, and take limits of the closed curves described in section \ref{closed}.
The stretching is described in section \ref{indices} together with the Fredholm theory for the limiting curves.

We will also be interested in intersection properties of the limiting finite energy curves. Therefore we briefly describe Siefring's extended intersection number in section \ref{siefring}.

With all of this in place,
the limits of curves in the fiber class are considered in section \ref{fefoln}. They form a finite energy foliation, see \cite{hwzfol}.

Finally in section \ref{highdegree} we consider the limit of a high degree curves with
pointwise constraints (the degree will depend on $R$). We are able to make deductions about this limit based on the structure of the finite energy foliation, and we will see that for the limit to have nonnegative area it is necessary that $a \ge 3$, thus proving Theorem \ref{main}.

To fix notation, let $(k,l) \in H_1(L)$ denote the homology class
$k[\partial D(1)] + l[\partial D(2)]$.

\section{Closed curves in $X$} \label{closed}

We study two classes of closed curves in $X = \CC P^2 (R) \sharp \overline{\CC P^2(1)}$, the blow-up of a ball of capacity $1$ in $\CC P^2$ with its Fubini-Study form scaled such that lines have area $R$. We work with tame almost-complex structures $J$ on $X$ for which the exceptional divisor $E$ and the line at infinity $\CC P^1(\infty)$ are complex. Recall from section \ref{organize} that the blown-up ball is located inside our polydisk and so disjoint from $\CC P^1(\infty)$.

The results are fairly standard and so we merely outline the proofs.

%
\subsection*{Fiber curves}

The result here is the following:

\begin{proposition} \label{fiberfoln} (see \cite{hindivrii}, Proposition $4.1$)
For a generic $J$, the manifold $X$ is foliated by $J$ holomorphic spheres
in the class $[\CC P^1(\infty)] - [E]$.
\end{proposition}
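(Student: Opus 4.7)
The strategy is to deform from an integrable complex structure for which the foliation is manifest. Present $X$ as the Hirzebruch surface $\mathbb{F}_1$, the non-trivial $\CC P^1$-bundle over $\CC P^1$, by choosing an integrable complex structure $J_0$ on $\CC P^2$ making the blown-up ball holomorphic; the fibers of the resulting ruling are then $J_0$-holomorphic spheres in the class $F := [\CC P^1(\infty)] - [E]$, and they tautologically foliate $X$. The goal is to transport this foliation to any sufficiently generic tame $J$ for which $E$ and $\CC P^1(\infty)$ remain $J$-complex.

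Next I would set up the Fredholm and intersection theory. The class $F$ satisfies $F \cdot F = 0$ and $c_1(F) = 2$, so the moduli space $\M(F, J)$ of unparametrized $J$-holomorphic spheres in class $F$ has expected dimension $2c_1(F) - 2 = 2$ and is a smooth $2$-manifold for generic tame $J$ by standard transversality. Since $F$ is primitive all such curves are simple; the adjunction equality $F \cdot F = c_1(F) - 2$ forces each curve to be embedded, and positivity of intersections combined with $F \cdot F = 0$ shows that any two distinct curves are disjoint.

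The main obstacle is Gromov compactness. Any bubble limit of curves in class $F$ must, by positivity of intersection with the $J$-holomorphic exceptional divisor $E$ and the uniqueness of $E$ in its class, split as a (possibly iterated) $E$-component together with residual components in a class of the form $[\CC P^1(\infty)] - (k+1)[E]$ for some $k \ge 1$. Such residual classes have Chern number $2-k \le 1$, hence expected dimension $\le 0$, and for generic $J$ support only finitely many (or zero) curves; a parametric transversality argument along a path of tame almost-complex structures $J_t$ from $J_0$ to the given generic $J$ then shows that bubbling occurs on a codimension-one locus in the parameter, and the $2$-parameter family of embedded $F$-curves persists through these events.

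With compactness under control, I would close the argument by a degree count: the evaluation map from the universal moduli space of $F$-curves with a marked point to $X$ has degree one by cobordism invariance with the $J_0$ case, it is a local diffeomorphism by regularity and embeddedness (the normal bundle of each $F$-curve has degree $F \cdot F = 0$, so evaluation of its unique holomorphic section at any point is an isomorphism onto the normal fibre), and it is injective by disjointness of distinct fibres. Hence it is a diffeomorphism, and the images of the $F$-curves foliate $X$ as claimed. The hardest piece is the bubbling analysis, but it is kept tractable by the rigidity of $E$ and the sub-dimensionality of the residual classes.
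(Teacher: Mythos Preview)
Your Fredholm setup, adjunction, and positivity-of-intersection arguments are correct and match the paper's. The gap is in the compactness paragraph. For the $k=1$ bubble the residual class $[\CC P^1(\infty)]-2[E]$ has $c_1=1$ and hence index exactly $0$, so for a generic $J$ you can only conclude there are \emph{finitely many} such spheres, not zero. Your claim that ``bubbling occurs on a codimension-one locus in the parameter'' is then false for this stratum: an index-$0$ nodal configuration persists over an open subset of the path $\{J_t\}$, not a discrete one, and nothing you have written excludes such nodal limits at the target $J$. If they occur, the moduli of smooth $F$-spheres is not compact and the degree-one evaluation argument cannot conclude that smooth $F$-spheres alone foliate $X$.

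The paper rules out exactly this case by adjunction: the components $[\CC P^1(\infty)]-2[E]$ and $[E]$ have intersection number $2$, so gluing a putative nodal limit at one node would produce a somewhere-injective $F$-sphere with a leftover self-intersection, contradicting the adjunction equality for $F$ that you already invoked. Equivalently and more directly, $[\CC P^1(\infty)]-2[E]$ has self-intersection $-3$ and $c_1=1$, violating the adjunction inequality $A\cdot A\ge c_1(A)-2$ for somewhere-injective $J$-spheres, so it carries no holomorphic representative at all. With this inserted your argument goes through; note however that the paper proceeds more directly---once nodal limits are excluded, compactness together with $F\cdot F=0$ and embeddedness immediately give the foliation, so neither the cobordism from $J_0$ nor the degree count is needed, and the paper invokes automatic regularity rather than generic transversality (which also sidesteps any tension with the requirement that $E$ and $\CC P^1(\infty)$ remain $J$-complex).
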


{\it Outline of the proof.}

Let $C$ be a holomorphic sphere in the class  $[\CC P^1(\infty)] - [E]$. Then $C \bullet C =0$. The index
formula, see for example \cite{msa}*{Theorem 3.1.5} gives the virtual dimension $\mathrm{index}(C)=2$. By positivity
of intersection such a curve $C$ must intersect $\CC P^1(\infty)$ transversally in a single point and so be somewhere injective. Further, by
the adjunction formula \cite{msa}*{Section 2.6}, $C$ is embedded. Then
by automatic regularity, see \cite{msa}*{Lemma 3.3.3}, the corresponding moduli space has dimension $2$.

Suppose we have a nodal curve representing the same homology class. As
$R<3$ and $\mathrm{area}(C)=R-1$, by area considerations a nodal curve
can only have two two components, one in the class $[ \CC P^1] - 2[E]$
and the other in the class $[E]$.  Note that each of these components
has index $0$ and is somewhere injective, and thus transverse. They have
intersection number $2$, and for generic $J$, they intersect transversely
and thus in $2$ points.  Gluing would then give a somewhere injective
index $2$ curve in our class with one geometric intersection.  This is
a contradiction to the adjunction formula.

Hence the moduli space of curves in this class is compact and as $C
\bullet C =0$ consists of curves with disjoint image. It follows that
the curves foliate $X$ as required.
\qed

%
\subsection*{High degree curves}

We now consider holomorphic spheres $\highdeg$ in $X$ of high degree $d >1$. To work with isolated curves we impose a collection of point constraints and then have the following.

\begin{proposition} \label{highdeg}
Given $2d$ constraint points and a generic $J$ (to exclude nodal curves), there
is a unique embedded holomorphic sphere $\highdeg$ in the class $d[\CC P^1(\infty)] - (d-1)[E]$ passing through the points.
\end{proposition}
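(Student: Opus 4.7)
My plan is to follow the template of Proposition~\ref{fiberfoln}, combining an index/adjunction computation with automatic regularity and a classical existence count. Setting $A = d[\CC P^1(\infty)] - (d-1)[E]$, I would first compute $c_1(A) = 2d+1$ and $A\bullet A = 2d-1$ (using $c_1(X) = 3[\CC P^1(\infty)] - [E]$), so that \cite{msa}*{Theorem 3.1.5} gives virtual real dimension $4d$ for the moduli space of unparametrised genus-zero spheres in class $A$. Imposing passage through $2d$ generic points (codimension $4$ each, with $2$ added per marked point) brings the expected real dimension down to $0$.

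Next, because $\gcd(d,d-1)=1$ the class $A$ is primitive, so any $J$-holomorphic representative $\highdeg$ is somewhere injective. The adjunction formula \cite{msa}*{Section 2.6} yields arithmetic genus $\tfrac12 (A\bullet A - c_1(A)) + 1 = 0$, and the inequality $g_{\mathrm{arith}} \ge g + \delta$ forces $\delta = 0$: any such $\highdeg$ is embedded. Since $A\bullet A = 2d-1 \ge -1$, automatic regularity \cite{msa}*{Lemma 3.3.3} then makes the constrained moduli space smooth of the expected dimension zero.

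For existence and the count, I would switch to the integrable $J_0$ pulled back from $\CC P^2$: curves in class $A$ correspond to plane curves of degree $d$ with an ordinary $(d-1)$-fold point at the blown-up centre, and such curves form a linear system of projective dimension $\tfrac{d(d+3)}{2} - \binom{d}{2} = 2d$. Thus $2d$ generic point conditions determine a unique member, whose proper transform is the desired $\highdeg$. Since automatic regularity forces positive signs on all contributions, this count of $1$ is a Gromov-Witten invariant and persists to every generic tame $J$.

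The hard part will be excluding nodal configurations for generic $J$. I would argue by a standard dimension count on components: for a decomposition $A = \sum_{i=1}^m A_i$ with $m \ge 2$ somewhere-injective components carrying $k_1,\dots,k_m$ constraint points ($\sum k_i = 2d$), each constrained component moduli has expected real dimension $2c_1(A_i) - 2 - 2k_i$, so summing yields $2c_1(A) - 2m - 4d = 2 - 2m \le -2$; hence for generic $J$ no such strata are populated. The special cases (multiple covers, ghost bubbles, and components equal to $E$ or to the fiber class $[\CC P^1(\infty)] - [E]$) require separate but similar treatment, using that $E$ is a fixed curve disjoint from the generic points and that by Proposition~\ref{fiberfoln} each fiber meets at most one constraint point.
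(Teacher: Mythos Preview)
Your proposal is correct and follows essentially the same outline as the paper: coprimality gives somewhere injective, adjunction gives embedded, an index/dimension count excludes nodal degenerations for generic $J$, and existence is verified at an explicit integrable structure so that the Gromov--Witten invariant is $\pm 1$. The only minor differences are that the paper realizes the existence step via the description of $X$ as the degree-$1$ $\CC P^1$-bundle over $\CC P^1$ (curves in class $A$ being sections with $d$ zeros and $d-1$ poles) rather than via linear systems on $\CC P^2$, and deduces uniqueness directly from positivity of intersection (two distinct representatives through the same $2d$ points would force $A \bullet A \ge 2d > 2d-1$) rather than from positivity of the Gromov--Witten count.
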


{\it Outline of the proof.}

Let us fix $2d$ points in $X$, and let $\highdeg$ be a holomorphic sphere in the class $d[\CC P^1(\infty)] - (d-1)[E]$. If the sphere is smooth (i.e.~not a nodal curve), it is somewhere injective because $d$ and $d-1$ are
coprime. By the adjunction formula as above, the curve is then embedded.

As $\mathrm{index}(\highdeg)=2d$ the virtual
dimension of curves in our class passing through the fixed points is $0$,
and as $\highdeg \bullet \highdeg = 2d-1$
there can be at most one such curve. Further,
an index calculation shows that nodal representatives of this homology
class are of codimension at least $2$ and so we do not expect such a
nodal curve to intersect all $2d$ points for generic $J$.

Finally we are left to check that the moduli space is nonempty. We observe that if the moduli space is nonempty for {\it some} $J$ then the
corresponding Gromov-Witten invariant is $\pm 1$ and so it will be nonempty for all generic $J$. Hence we can choose
a complex structure such that the projection $X \to \CC P^1(\infty)$
along the fibers of the above foliation is holomorphic, describing $X$
as a nontrivial holomorphic $\CC P^1$ bundle over $\CC P^1$.  Our curves
are sections of the above bundle with $d$ `zeros' (intersections with $\CC
P^1 (\infty)$) and $d-1$ `poles' (intersections with $E$) and exist because the bundle has degree $1$. 
\qed

\section{Fredholm theory} \label{indices}

In this section, we describe our neck stretching procedure and determine the Fredholm index of the types of curve
that can appear in the holomorphic buildings we obtain by stretching
the neck.

\subsection{Stretching the neck.} \label{stretch}

To perform a neck stretch we must choose a tubular neighborhood $U$ of
$L$ in $X$. Recall that $L = \partial D(1) \times \partial D(2)
	\subset P(1,2)\sharp \overline{\CC P^2(1)} \subset X$. By Weinstein's theorem a tubular neighborhood can be identified
with a neighborhood of the zero-section in $T^* L$. We fix a flat metric
on $L$ and let our neighborhood be a unit cotangent disk bundle with
boundary $\Sigma = T^3$. The Louville form on $T^* L$ restricts to a contact
form $\lambda$ on $\Sigma$ and the associated Reeb vector field $v$ generates the
geodesic flow. We can identify a neighborhood of $\Sigma$ in $X$ with the symplectic manifold $(\Sigma \times (-\epsilon,\epsilon), d(e^t \lambda))$, where $\lambda$ is pulled back from $\Sigma$ to $\Sigma \times (-\epsilon,\epsilon)$ using the natural projection.

Now choose a sequence of almost-complex structures $J^N$
on $\CC P^2(R)$.
We arrange that these all coincide outside of a
neighborhood of $\Sigma$ and also that both $E$ and $\CC P^1(\infty)$
are $J^N$--holomorphic. Further, on the neighborhood $\Sigma \times (-\epsilon,\epsilon)$ we assume that all $J^N$ preserve the planes $\xi = \{\lambda =0\}$ and coincide on these planes. However we require $J^N(v) = \frac{1}{N} \frac{\partial}{\partial t}$. Then $(\CC P^2, J^N)$ admits a biholomorphic
embedding of $\Sigma \times (-N,N)$ equipped with a translation invariant
complex structure mapping the Reeb vectors to the unit vectors in
$(-N,N)$.

A sequence of $J^N$ holomorphic spheres of fixed degree has
a limit in the sense of \cite{BEHWZ}. This is a holomorphic building
whose components are finite energy curves mapping into three almost-complex
manifolds with cylindrical ends, namely
\begin{enumerate}
\item $X \setminus U \cup \Sigma \times (-\infty,0]$;
\item $\Sigma \times \RR$;
\item $U \cup \Sigma \times [0,\infty)$.
\end{enumerate}
The almost-complex structures here coincide with the $J^N$ away from the ends, and are translation invariant on the cylindrical ends.
We note that given a compact set $K$ in any of these three cylindrical manifolds there exists a biholomorphic embedding $K \to (X, J^N)$ for all $N$ sufficiently large. For the theory of finite energy curves see 
\citelist{\cite{hofa} \cite{hofi} \cite{hoff}}. 
We will often use diffeomorphisms to identify the first and third cylindrical manifolds with $X \setminus L$ and $T^* L$ respectively.

The finite energy curves constituting our holomorphic building have a level structure. If the building has level $k$ then we say that the curves mapping to $T^* L$ have level $0$, the curves mapping to $X \setminus L$ have level $k$ and the curves mapping to $\Sigma \times \RR$ have a level $l$ with $0 < l < k$. Finite energy curves of level $0$ have only positive ends, curves of level $k$ have only negative ends, and the punctures of curves mapping to $\Sigma \times \RR$ are positive if the $t$ coordinate approaches $+\infty$ as we approach the puncture and negative if the $t$ coordinate approaches $-\infty$. For a union of the finite energy curves to form a building we require that the punctures be matched in pairs such that each positive end of a curve of level $l$ is matched with a negative end of a curve of level $l+1$ asymptotic to the same geodesic. By identifying the underlying Riemann surfaces (that is, the domains of the finite energy curves) at matching punctures we can think of the domain of a holomorphic building as a nodal Riemann surface.

\subsection{Compactification and area.} \label{area}

The finite energy curves in all three manifolds have punctures asymptotic to Reeb orbits on $\Sigma = T^3$. Under our identification of the first manifold with $X \setminus L$ these curves can be extended to maps from the oriented blow-up of the Riemann surface at its punctures, mapping the boundary circles to the asymptotic closed Reeb orbit on $L$, see Proposition $5.10$ of \cite{BEHWZ}. Away from their singularities the finite energy curves are now symplectic immersions, that is, the pull back of the symplectic form on $X$ is nondegenerate. The integral of this symplectic form is defined to be the {\it area} of the finite energy curve. As $L$ is Lagrangian, it follows from the compactness theorem of \cite{BEHWZ} that for a converging sequence of $J^N$ holomorphic spheres in a fixed homology class $A \in H_2(X, \ZZ)$, the sum of the areas of the components of the limiting building in $X \setminus L$ is equal to $\omega_R(A)$.

\subsection{Fredholm indices.}

By our construction, the Reeb dynamics on $\Sigma = T^3$ gives
the geodesic flow on $L=T^2$ for the flat metric.
The Reeb orbits on $\Sigma = T^3$ are then in bijective correspondence
with the geodesics on $T^2$.
There is an $S^1$ family of closed geodesics in each homology class $(k,l)$
for $(k,l) \in \ZZ^2 \setminus \{ (0,0) \}$. Here we recall from section \ref{organize} that $L$ lies in the boundary of a polydisk $P(1,2)$ and  $(k,l) \in H_1(L)$ denotes the homology class
$k[\partial D(1)] + l[\partial D(2)]$

We make the convention that a curve in $X \setminus L$ is of degree $d$
if its intersection number with $\CC P^1(\infty)$ is $d$.  Denote its
intersection number with $E$ by $e$.  Observe that since these are
intersections with classes that have holomorphic representatives, $d
\ge 0$ and $e \ge 0$, unless the holomorphic curve is a cover of $E$ (in which case, $e < 0$ and $d = 0$).

If the holomorphic curve in question is somewhere injective, then for
a generic almost complex structure, the moduli space of nearby finite energy
curves will have dimension given by its Fredholm index, and in the remainder of this section we list these indices.

We start with curves in $X \setminus L$.


\begin{proposition} \label{out}

Let $C$ be a curve in $X \setminus L$ of degree $d$, with $e$
intersections with $E$,  and with $s$ negative ends asymptotic to
geodesics in the classes $(k_i,l_i)$ respectively for $1 \le i \le s$.

The index of $C$ (as an unparametrized curve, allowing
the asymptotic ends to move in the corresponding $S^1$ family of Reeb
orbits) is given by
$$\mathrm{index}(C) = s - 2 +6d -2e + 2\sum_{i=1}^s (k_i+l_i).$$

\qed
\end{proposition}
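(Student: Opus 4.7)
The plan is to apply the general Fredholm index formula for a somewhere injective finite-energy curve in a symplectic cobordism whose ends are modelled on a Morse--Bott contact manifold. For a genus-zero curve $C$ with only negative ends, this reads
\[
\mathrm{index}(C) = (n-3)\chi(\dot\Sigma) + 2c_1^\tau(C) - \sum_{i=1}^s \mu_{CZ}^\tau(\gamma_i) + \mathrm{MB}(C),
\]
where $\tau$ is a symplectic trivialization of $\xi = \ker\lambda$ along each asymptotic Reeb orbit, and $\mathrm{MB}(C)$ accounts for each asymptote being free to move within its $S^1$--family. With $n = 2$ and $\dot\Sigma$ a sphere with $s$ punctures, $\chi(\dot\Sigma) = 2-s$, so this piece contributes $s-2$.

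Next I would fix the trivialization $\tau$ via an explicit capping of each orbit $\gamma_{k,l}$ in $X$. Since $\gamma_{k,l}$ projects to a closed geodesic on $L$ in class $k[\partial D(1)] + l[\partial D(2)]$, it is capped by $k$ copies of $D(1)\times\{pt\}$ together with $l$ copies of $\{pt\}\times D(2)$; these are holomorphic disks in $P(1,2) \subset X$ with boundary on $L$. Combined with the splitting $TX|_L = TL \oplus TL^\perp$, this yields a symplectic trivialization of $TX$ extending over the capping surface. With this choice, the relative first Chern number agrees with $c_1(TX) \cdot [\bar C]$, where $\bar C$ is the closed surface built from $C$ and its caps. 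Writing $[\bar C] = dH - eE$ (with $H = [\CC P^1(\infty)]$ and $E$ the exceptional class) and using $c_1(TX) = 3H - E$, one obtains $2c_1^\tau(C) = 6d - 2e$.

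It remains to compute the asymptotic contribution. In the flat trivialization of $\xi$ along a $(k,l)$-geodesic, the linearized Reeb flow is a parabolic shear with trivial winding, so the Morse--Bott CZ index vanishes in that trivialization. The capping trivialization $\tau$ differs from the flat one by a winding equal to the Maslov index of the capping surface; a direct calculation gives $\mu(D(1)\times\{pt\}) = \mu(\{pt\}\times D(2)) = 2$, so the total capping Maslov index of $\gamma_{k,l}$ is $2(k+l)$. Combined with the Morse--Bott correction $\mathrm{MB}(C)$ and the appropriate sign, this produces the asymptotic term $2\sum_{i=1}^s(k_i + l_i)$ in the index formula. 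The main obstacle is keeping all Morse--Bott sign and half-integer shift conventions consistent, so that the perturbation of the $S^1$--family, the change-of-trivialization shift for $\mu_{CZ}^\tau$, and $\mathrm{MB}(C)$ combine unambiguously to the stated integer.
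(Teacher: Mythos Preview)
Your approach is correct and is essentially the same as the paper's: both apply the Morse--Bott Riemann--Roch formula and compute the relative Chern number and Conley--Zehnder contributions. The only organizational difference is the choice of trivialization. You use the trivialization $\tau$ that extends over the capping disks in the bidisk, so that $2c_1^\tau(C) = 6d - 2e$ and the term $2\sum(k_i+l_i)$ emerges from the CZ/Maslov shift under the change of trivialization together with the Morse--Bott correction. The paper instead uses the global trivialization $\Phi$ of $\xi$ on $T^3$ induced by $T^*T^2$; in that trivialization the (Morse--Bott perturbed) CZ index at a negative end is simply $0$, and the $(k_i+l_i)$ contribution appears entirely in the relative Chern number, $c_1^\Phi(u^*TX) = 3d - e + \sum(k_i+l_i)$. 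The paper's choice sidesteps exactly the bookkeeping worry you flag at the end, since all of the Morse--Bott and change-of-trivialization subtleties collapse into a single winding-number computation for $c_1$.
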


For curves in $\RR \times \Sigma$, we have
\begin{proposition} \label{neck}
The index of a finite energy curve $C$ in
$\RR \times \Sigma$ of genus $0$ with
$s^+$ positive punctures and $s^-$ negative punctures is given by
$$\mathrm{index}(C)=2s^+ + s^- -2.$$
\end{proposition}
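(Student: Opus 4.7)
The plan is to apply the standard Fredholm index formula for somewhere injective finite energy curves in a symplectization with Morse--Bott asymptotics, along the same lines as the proof of Proposition \ref{out}. In real dimension four and genus zero, this formula reads
\[
\mathrm{index}(C) = -\chi(\dot{\Sigma}) + 2c_1^{\tau}(u^{*}T) + \sum_{i=1}^{s^+} \mu^+_i - \sum_{j=1}^{s^-} \mu^-_j,
\]
where $-\chi(\dot{\Sigma}) = s^+ + s^- - 2$ and the $\mu_i^{\pm}$ are Conley--Zehnder indices (with Morse--Bott corrections) of the asymptotic Reeb orbits, computed with respect to a trivialization $\tau$ of the contact distribution $\xi$ along these orbits. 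The task is then to evaluate each term for our specific geometry.

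First, the Chern class contribution vanishes: the contact structure $\xi$ on $T^{3}$ is globally trivial, and together with the complex line bundle spanned by $\partial_t$ and the Reeb vector field it yields a global trivialization of $T(\mathbb{R}\times T^{3})$. With respect to this trivialization $c_1^{\tau}(u^{*}T) = 0$. Next, because the Reeb flow is the geodesic flow of a flat metric on $T^{2}$, the linearized return map along every closed Reeb orbit is the identity, and so the Robbin--Salamon index of each Morse--Bott orbit with respect to the flat trivialization is zero. Finally, adopting the Morse--Bott perturbation convention implicit in Proposition \ref{out}, the perturbation of each $S^{1}$-family of orbits contributes $+1$ at each positive puncture and $0$ at each negative puncture, giving $\sum_i \mu^+_i - \sum_j \mu^-_j = s^+$.

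Summing the three contributions yields
\[
\mathrm{index}(C) = (s^{+} + s^{-} - 2) + 0 + s^{+} = 2 s^{+} + s^{-} - 2,
\]
as claimed. The one subtle point, and the main obstacle in writing out full details, is ensuring that the Morse--Bott convention used here is consistent with the one used in Proposition \ref{out}. This is automatic once one observes the homological identity $\sum_i (k^+_i, l^+_i) = \sum_j (k^-_j, l^-_j)$ in $H_1(T^{3})$ satisfied by every finite energy curve in $\mathbb{R}\times T^{3}$, which forces any difference in trivialization choices at matched punctures to cancel when curves are assembled into a building; this makes the result compatible with gluing to yield the expected index of the closed curves in $X$ studied in section \ref{closed}.
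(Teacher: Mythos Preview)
Your approach is essentially the same as the paper's: apply the Riemann--Roch index formula from Appendix~\ref{A:RiemannRoch}, use the global trivialization of $\xi$ on $T^3$ to get $c_1^\tau=0$, and compute that the Morse--Bott Conley--Zehnder contribution is $+1$ at each positive puncture and $0$ at each negative puncture, yielding $(s^++s^--2)+s^+ = 2s^++s^--2$.

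One small inaccuracy: the linearized return map along a closed Reeb orbit is \emph{not} the identity but a shear. In the trivialization $e_1=\sin\theta\,\partial_{q_1}-\cos\theta\,\partial_{q_2}$, $e_2=\partial_\theta$ of $\xi$, the linearized flow is $\begin{pmatrix}1&-t\\0&1\end{pmatrix}$, reflecting that only the $e_1$-direction is tangent to the Morse--Bott $S^1$-family. This is exactly the asymptotic operator $-J_0\frac{d}{dt}-T\begin{pmatrix}0&0\\0&1\end{pmatrix}$ computed in Appendix~\ref{A:RiemannRoch}. Your conclusion is unaffected: the eigenvalue analysis there gives perturbed Conley--Zehnder indices $1$ and $0$ at positive and negative punctures respectively, which is what you use. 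Your closing remark on compatibility with Proposition~\ref{out} via the homological identity $\sum(k_i^+,l_i^+)=\sum(k_j^-,l_j^-)$ is a correct consistency check; the paper instead handles this by computing the relative Chern numbers in the two trivializations separately.
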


Finally, for curves in $T^* L$ we obtain,
\begin{proposition} \label{in}
The  index of a genus $0$, finite energy curve
$C$ in $T^*T^2$, with $s$ (positive) punctures, is
$$\mathrm{index}(C)=2s-2.$$
\end{proposition}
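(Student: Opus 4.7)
The plan is to apply the standard Fredholm index formula for somewhere injective, genus $0$, finite energy curves in a $4$-dimensional almost complex manifold with cylindrical end and Morse-Bott asymptotic Reeb orbits. In the conventions used throughout Section \ref{indices}, in which each asymptotic end is allowed to slide freely in the $S^1$-family of Reeb orbits to which it is asymptotic, this formula takes the shape
$$\mathrm{index}(C) = -\chi(\dot\Sigma) + 2c_1^\tau(C) + \sum_{i=1}^{s^+}\bigl(\mathrm{CZ}^\tau(\gamma_i^+) + 1\bigr) - \sum_{j=1}^{s^-}\mathrm{CZ}^\tau(\gamma_j^-),$$
where $\dot\Sigma$ is the punctured domain of $C$, $\tau$ is a symplectic trivialization of $\xi = \ker\lambda$ along each asymptotic orbit, $c_1^\tau(C)$ is the relative first Chern number, and the $+1$ per positive end encodes the Morse-Bott correction from the $S^1$-family freedom.

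For Proposition \ref{in} every puncture is positive, so $s^- = 0$ and $s^+ = s$. First I would choose the trivialization $\tau$ coming from the canonical flat framing $T(T^*T^2) \cong T^*T^2 \times \CC^2$; equivalently, $\tau$ is the restriction to $\xi|_\Sigma$ of a global complex trivialization of $T(T^*T^2)$ extending the constant framing on the zero section $L$. Because this $\tau$ extends globally over $T^*T^2$ as a complex trivialization, the relative first Chern number satisfies $c_1^\tau(C) = 0$ for every such curve $C$.

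Next I would verify that $\mathrm{CZ}^\tau(\gamma) = 0$ for every closed Reeb orbit $\gamma$ on $\Sigma = T^3$. Since the metric on $L = T^2$ is flat, closed geodesics come in parallel $S^1$-families and the linearized geodesic flow along any such geodesic is the identity; in the trivialization $\tau$ the associated path of symplectic matrices along $\gamma$ is literally the constant identity path, whose Robbin-Salamon (and hence Conley-Zehnder) index is $0$. Plugging $-\chi(\dot\Sigma) = s - 2$, $c_1^\tau(C) = 0$, and $\mathrm{CZ}^\tau(\gamma_i) + 1 = 1$ into the formula gives $\mathrm{index}(C) = (s - 2) + 0 + s = 2s - 2$.

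The main challenge is a bookkeeping one: settling on a consistent normalization of the Morse-Bott contribution, in particular the $+1$ per positive end arising from the $S^1$-family of asymptotic orbits. I would cross-check the convention by running the same formula in the settings of Propositions \ref{out} and \ref{neck}; in Proposition \ref{out} one must use a different trivialization $\tau_{\mathrm{out}}$ near each orbit $\gamma_{k,l}$, namely one extending over the blow-up $X$ rather than over $T^*L$, which picks up $c_1^{\tau_{\mathrm{out}}}(C) = 3d - e$ as well as the relative-winding shift $\mathrm{CZ}^{\tau_{\mathrm{out}}}(\gamma_{k,l}) = -2(k+l)$, and the same Morse-Bott normalization then reproduces $s - 2 + 6d - 2e + 2\sum(k_i + l_i)$ for negative ends, confirming that the convention is correct.
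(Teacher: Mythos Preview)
Your approach is essentially the paper's: apply the Morse--Bott Riemann--Roch formula with the trivialization of $\xi$ that extends over $T^*T^2$, so that $c_1^\tau=0$, and then identify the Conley--Zehnder contribution at each positive end as $+1$. The paper packages the $+1$ as the Conley--Zehnder index of the $\delta$-perturbed asymptotic operator (Appendix~\ref{A:RiemannRoch}) rather than as ``$\CZ=0$ plus a Morse--Bott correction,'' but the arithmetic is identical, and your cross-check against Propositions~\ref{out} and~\ref{neck} is exactly how the paper fixes its conventions.

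One factual slip to correct: the linearized geodesic (equivalently Reeb) flow along a closed geodesic on the flat torus is \emph{not} the identity on $\xi$; it is the shear $\begin{pmatrix}1&t\\0&1\end{pmatrix}$, reflecting that Jacobi fields on a flat metric are affine. Correspondingly the asymptotic operator is $-J_0\,d/dt - T\begin{pmatrix}0&0\\0&1\end{pmatrix}$, with winding-$0$ eigenvalues at $0$ and $-T$ (not a double eigenvalue at $0$), and its Robbin--Salamon index is $\tfrac12$ rather than $0$. Your numerics still come out right because your ``$\CZ^\tau(\gamma)+1$'' lands on the correct perturbed value $\CZ(A+\delta)=1$; but the justification via ``constant identity path'' should be replaced by the actual spectral computation the paper carries out in Appendix~\ref{A:RiemannRoch}.
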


Note that we may consider a (sub)building consisting of a level in $T^*L$ and
possibly multiple levels in $\R \times T^3$ with the requirement of matching ends. If we now interpret $s$ to be the
number of positive punctures of the uppermost level of the building, the
index formula of Proposition \ref{in} also applies, giving the deformation index of the space of buildings.


Each of these index calculations follows immediately as an application of
the appropriate version of Riemann-Roch  and
by the computation  of the Conley-Zehnder
indices of the Reeb orbits in $T^3$, where the computation is done with
respect to the trivialization induced by $T^*T^2$ for Propositions
\ref{neck} and \ref{in}, and with respect to the trivialization from
the bidisk $D^2 \times D^2$ for Proposition \ref{out} (see Appendix
\ref{A:RiemannRoch}).

\section{Intersections of finite energy curves} \label{siefring}

Siefring's intersection theory for punctured pseudoholomorphic
curves, extended by Siefring and Wendl to the Morse-Bott case
\citelist{\cite{Siefring} \cite{SiefringWendl} \cite{wendl}*{Section 4.1}}
is very useful to our study and we briefly outline this here.

We require the existence of the following generalization of the intersection
number, valid for curves with cylindrical ends. We say that a curve has
cylindrical ends if it can be parametrized by a punctured Riemann surface
and is pseudoholomorphic of finite energy
in a neighbourhood of the punctures.
\begin{theorem}
	[\citelist{\cite{Siefring}*{Theorem 2.1} \cite{wendl}*{Section
			4.1}}]
	\label{T:intersectionNumber}
  Let $(W, \omega, J)$
  be an almost complex manifold with cylindrical ends that are
  symplectizations of contact manifolds,
  with almost complex structure adapted to Morse-Bott contact forms.

  Then, for two curves with cylindrical ends, $u$, $v$ in $W$,
  there exists an extended intersection number $u \star v$
  with the following properties:
  \begin{enumerate}
    \item If $u$ and $v$ asymptote to disjoint sets of orbits, $u \star v$
      is the intersection number $u \bullet v$.
    \item The extended intersection number
	    $u \star v$ is invariant under homotopies of $u$ and $v$ through
	    curves with cylindrical ends.
    \item If $u$ and $v$ are geometrically distinct (i.e.~their images do not
	    coincide on any open set), $u \star v \ge 0$.
    \end{enumerate}
    Furthermore, if a sequence of curves representing classes $C$ and $C'$ in
    a compact manifold $X$ converge to holomorphic buildings with levels $u_1,
    \dots, u_N$ and levels $u'_1, \dots, u'_N$, the following inequality holds:
    $C \bullet C' \ge \sum u_i \star u'_i$.
\end{theorem}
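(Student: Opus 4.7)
The plan is to follow Siefring's asymptotic analysis framework. The starting point is the asymptotic formula for finite-energy pseudoholomorphic curves at punctures: if $u$ is asymptotic at a puncture to a Reeb orbit $\gamma$, then after choosing cylindrical coordinates $(t,\theta)$ near the puncture and writing $u$ as a section of the normal bundle of $\RR\times\gamma$, the section decays like
\[
U(t,\theta) = e^{\mu t}\bigl(\eta(\theta) + r(t,\theta)\bigr)
\]
with $r\to 0$, where $\mu<0$ (for a positive puncture) is an eigenvalue and $\eta$ an eigenfunction of the Morse-Bott asymptotic operator $A_\gamma$ associated to $\gamma$. The relative winding of $\eta$ with respect to a chosen trivialization is the fundamental invariant one uses to count ``asymptotic intersections'' at the puncture.

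First, I would use this formula to define, for each pair of ends of $u$ and $v$ asymptotic to the \emph{same} orbit $\gamma$ (or to orbits in the same Morse-Bott family), an asymptotic contribution $\iota_\infty(u_i, v_j)$ in terms of winding numbers of the corresponding eigenfunctions. Then set
\[
u \star v \Def u \bullet_\tau v + \iota_\infty(u,v),
\]
where $u \bullet_\tau v$ is the algebraic intersection number computed after a small perturbation of the ends determined by a trivialization $\tau$, and $\iota_\infty(u,v)$ is a corresponding correction summing winding contributions at all matched ends. The key computation is to show that different trivialization choices shift the two terms by opposite amounts, so that $u\star v$ is well-defined, and that for disjoint asymptotic orbits both correction terms vanish, giving property (1). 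Homotopy invariance (2) is then immediate from homotopy invariance of $u\bullet_\tau v$ (for compactly supported perturbations) combined with the fact that the relevant asymptotic winding numbers are locally constant under homotopies through curves with uniformly controlled asymptotics.

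For positivity (3) I would argue in two stages. Away from the ends, geometrically distinct pseudoholomorphic curves have only positive, isolated intersections, contributing nonnegatively to $u\bullet_\tau v$. At a matched end one needs a local positivity statement: two distinct pseudoholomorphic maps asymptotic to the same Morse-Bott family $\gamma$ are, after restriction to a small cylindrical neighbourhood of the puncture, sections of the normal bundle that are again solutions of an asymptotically linear Cauchy--Riemann equation. A similarity principle then forces them to have only finitely many, isolated, positive intersections near infinity, and the net count is bounded below by the relative winding invariant appearing in $\iota_\infty$. This is the hard analytic step; the main obstacle is controlling the behaviour in the Morse-Bott case, where neighbouring orbits in the same $S^1$-family must be handled simultaneously, and where one needs the refined asymptotic expansion of Hofer--Wysocki--Zehnder / Siefring--Wendl to extract the leading eigenfunction cleanly.

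Finally, for the convergence inequality, suppose $u_n \to \{u_1,\dots,u_N\}$ and $v_n\to\{v_1',\dots,v_N'\}$ in the sense of SFT compactness. Fix cylindrical neighbourhoods of the matching orbits and decompose $u_n\bullet v_n$ as a sum over intersections lying in each level's region. Intersections in the cylindrical neck that persist after shifting the levels are recovered by the extended intersection numbers of the corresponding level components, while intersections that ``escape to infinity'' in the neck produce additional nonnegative contributions (by the same asymptotic positivity as in step (3)) which are not captured by $\sum u_i \star u_i'$. Thus $u_n\bullet v_n \ge \sum u_i\star u_i'$, as required.
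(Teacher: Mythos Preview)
The paper does not prove this theorem: it is quoted from \cite{Siefring} and \cite{wendl}, and the only ``proof'' given is the paragraph immediately following the statement, which explains that \emph{in the specific case at hand} (orbits in $T^3$ coming in Morse--Bott $S^1$-families) one can define $u\star v$ by pushing $u$ off at each end in the direction tangent to the Morse--Bott family and then taking the ordinary algebraic intersection with $v$. No asymptotic eigenvalue analysis, no positivity argument, and no proof of the convergence inequality appear in the paper; those are all delegated to the cited references.

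Your proposal is therefore not comparable to the paper's own argument but is rather a sketch of what Siefring's paper actually does. As such it is reasonable in outline: the definition via $u\bullet_\tau v + \iota_\infty$, the well-definedness via cancelling trivialization dependence, positivity from the similarity principle at the ends, and the neck-stretching inequality from lower semicontinuity are all the right ingredients. Two comments. First, the paper's hands-on description (push off in the constant direction of the Morse--Bott family) is a genuine simplification available in this specific geometry, and later in the paper (Lemma \ref{L:uniquePlane}, Lemma \ref{allsame}) this concrete picture is what is actually used together with the winding computation in Appendix \ref{A:RiemannRoch}; your more abstract $\iota_\infty$ formulation is correct but obscures this. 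Second, the Morse--Bott case you flag as the ``hard analytic step'' is precisely what is handled in \cite{SiefringWendl} and \cite{wendl}*{Section 4.1}; your sketch is right that the refined asymptotic formula (Theorem \ref{T:asymptotic}) is the key input, but turning that into a clean positivity bound in the degenerate case is nontrivial and your outline does not really resolve it --- it just names it.
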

In our setting, the existence of this intersection
number can be obtained by hand. Specifically, in $X \setminus L$,
the intersection number $u \star v$ of two curves $u$ and $v$ is obtained
by pushing $u$ off of itself with a small perturbation that is
tangent to the Morse-Bott family of orbits at each puncture to obtain a
curve $u'$. Then take $u \star v = u' \bullet v$.

A key fact necessary for the existence of this extended
intersection number, but also for its computation,
is the asymptotic convergence formula for pseudoholomorphic
curves. We will use this later, so we recall it here for
the benefit of the reader. The statement we provide here
is an immediate corollary of the more general results of Hofer, Wysocki
and Zehnder and of Siefring.

First, we recall the definition of an asymptotic operator associated
to a closed Reeb orbit.  Let $(M, \ker \alpha)$ be a contact manifold
with preferred contact form $\alpha$, and let $R$ be the associated
Reeb vector field. Let $J$ be an almost complex structure on $\xi = \ker \alpha$
compatible with $d\alpha|_{\xi}$.
Let $\gamma \colon \R/ T\Z \to M$ be a closed Reeb orbit of period $T$ (not
necessarily the minimal period). Choose a torsion-free connection $\nabla$.
Then, the asymptotic operator at the orbit $\gamma$ is the unbounded
self-adjoint operator:
\begin{align*}
	A_{\gamma, J} \colon L^2( \gamma^*\xi) &\to L^2( \gamma^*\xi) \\
	\eta &\mapsto -J( \nabla_{\partial_t} \eta - T \nabla_\eta R ).
\end{align*}
Note that this differential operator does not depend on the choice of
connection, but does depend on choice of almost complex structure.

We state the asymptotic convergence theorem for the case of a negative
puncture, since this is the case we need. A positive puncture behaves
analogously, with the signs modified appropriately.
\begin{theorem}
	[\citelist{ \cite{hofa}*{Theorem 1.3}
		\cite{SiefringAsymptotics}*{Theorem 2.2 and Theorem 2.3}} ]
	\label{T:asymptotic}

  Let $(W, \omega, J)$ be an almost complex manifold with cylindrical
  ends that are symplectizations of contact manifolds, with almost
  complex structure adapted to Morse-Bott contact forms.

  Let $u_1$ and $u_2$ be finite energy punctured pseudoholomorphic
  half-cylinders $u_1, u_2 \colon (-\infty, 0] \times S^1 \to X$,
  asymptotic to the same (possibly multiply covered) orbit $\gamma$
  of period $T > 0$.

  Then, for $R << 0$, there exist proper embeddings
  $\psi_1, \psi_2 \colon (-\infty, R] \times S^1 \to (-\infty, 0] \times S^1$,
  asymptotic to the identity,
  maps $U_i \colon (-\infty, R] \times S^1 \to (\gamma)^*\xi$ with
  $U_i(s,t) \in \xi(\gamma(Tt))$ so that
\[
	u_i( \psi_i(s,t)) = (-Ts, \exp_{\gamma(T t)} U_i(s,t)).
\]

  Furthermore, either $U_1 - U_2$ vanishes identically, or there exists
  a positive eigenvalue $\lambda > 0$ of the asymptotic operator
  $A_{\gamma, J}$ and a corresponding eigenvector so that
  \[
	  U_1(s,t) - U_2(s,t) = \e^{\lambda s} ( e(t) + r(s,t)),
  \]
  where $r$ and all its derivatives decay exponentially fast.
\end{theorem}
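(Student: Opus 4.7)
The plan is to reduce the statement to an ODE-with-remainder analysis for the asymptotic operator $A_{\gamma,J}$, following the strategy of Hofer--Wysocki--Zehnder and its Morse--Bott refinement by Siefring. First I would choose a tubular neighborhood of the Reeb orbit $\gamma$ in $M$ via the exponential map of a torsion-free connection, identifying a neighborhood of $\R \times \gamma$ in $\R \times M$ with an open neighborhood of the zero section of $\R \times (\R/T\Z) \times \gamma^*\xi$. In these coordinates the symplectization almost complex structure $J$ becomes a small perturbation of a model structure, and the Reeb field is $\partial_t$ modulo terms vanishing on the zero section.

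Next I would use the finite-energy condition and the standard bubbling-off/gradient-bound arguments of Hofer to show that the image of each $u_i$ eventually enters this tubular neighborhood, and that a suitable proper reparametrization $\psi_i:(-\infty,R]\times S^1 \to (-\infty,0]\times S^1$, asymptotic to the identity, allows us to write
\[
    u_i(\psi_i(s,t)) = \bigl(-Ts, \exp_{\gamma(Tt)} U_i(s,t)\bigr),
\]
with $U_i(s,t) \in \xi_{\gamma(Tt)}$ uniformly small. The point of the reparametrization is to absorb the action of the Reeb flow on the $S^1$ factor, and in the Morse--Bott setting also to absorb a motion along the Morse--Bott family of orbits corresponding to the zero eigenspace of $A_{\gamma,J}$. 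Plugging this graph ansatz into the Cauchy--Riemann equation yields a nonlinear equation of the form
\[
    \partial_s U_i + A_{\gamma,J} U_i = F_i(s,t,U_i),
\]
where $F_i$ is quadratically small in $U_i$ and its derivative.

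Subtracting the two equations, the difference $V := U_1 - U_2$ satisfies a linear equation $\partial_s V + A_{\gamma,J} V + G(s,t) V = 0$, where $G$ is a zeroth-order operator whose operator norm tends to zero as $s \to -\infty$. This is exactly the situation of the standard asymptotic-expansion lemma of Agmon--Nirenberg type (see Hofer--Wysocki--Zehnder and Siefring): by spectral decomposition of $V(s,\cdot)$ with respect to the self-adjoint operator $A_{\gamma,J}$, one obtains an ODE system whose coefficients are asymptotically constant. Either $V$ vanishes identically on some half-cylinder (and hence on all of $(-\infty,R]\times S^1$ by unique continuation, giving $U_1 \equiv U_2$), or the leading behavior is governed by a single eigenvalue $\lambda$ of $A_{\gamma,J}$ with eigenvector $e(t)$, yielding the expansion
\[
    V(s,t) = e^{\lambda s}\bigl(e(t) + r(s,t)\bigr),
\]
with $r$ decaying exponentially; elliptic bootstrapping then upgrades this to exponential decay of all derivatives of $r$.

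The main obstacle is ensuring $\lambda > 0$ in the Morse--Bott setting. The integrability of the $L^2$-norm of $V$ on $(-\infty,R]\times S^1$, which follows from the finite-energy hypothesis together with the exponential convergence of each $u_i$ individually to $\gamma$, forces the leading eigenvalue to be nonnegative; the positivity $\lambda > 0$, as opposed to $\lambda = 0$, is precisely what the careful choice of $\psi_i$ buys us, since it translates each $U_i$ so that its component in the zero eigenspace of $A_{\gamma,J}$ has decay strictly faster than any nonzero eigenmode contribution. Apart from this Morse--Bott bookkeeping, the remainder of the argument is standard elliptic analysis on cylinders with exponential weights.
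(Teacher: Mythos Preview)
The paper does not prove this theorem; it is quoted as a black box from \cite{hofa}*{Theorem 1.3} and \cite{SiefringAsymptotics}*{Theorems 2.2, 2.3}, so there is no ``paper's own proof'' to compare against. Your outline is a faithful sketch of the argument in those references: pass to tubular-neighborhood coordinates, write each half-cylinder as a graph $U_i$ over the orbit after a reparametrization, subtract to obtain $\partial_s V + A_{\gamma,J}V + G(s)V = 0$ with $G\to 0$, and apply the Agmon--Nirenberg/HWZ spectral ODE lemma to extract the leading eigenmode.

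One small sharpening: your explanation of why $\lambda>0$ rather than $\lambda=0$ is slightly off. At a negative puncture the decay $V\to 0$ as $s\to -\infty$ already forces $\lambda\ge 0$. The exclusion of $\lambda=0$ is not really about the $\psi_i$ absorbing motion along the Morse--Bott family for each curve separately; it is that both $u_1$ and $u_2$ are assumed asymptotic to the \emph{same} orbit $\gamma$, so the zero-eigenspace component of $V$ (which geometrically corresponds to a relative displacement within the Morse--Bott family) must itself decay, and Siefring's refined estimates then show the leading term lies in a strictly positive eigenspace. Apart from this nuance your plan is correct and matches the literature.
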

In particular, we will apply this theorem in the case where the two negative
ends come from curves asymptotic to different covers of the same simple orbit.
In that case, we can reparametrize the ends by factoring through a covering
map so that both converge to the same (higher multiplicity) orbit.

%
%
%
%

\section{A finite energy foliation} \label{fefoln}

Recalling Proposition \ref{fiberfoln}, we may assume that for each $N< \infty$ there
exists a foliation of $X$ by $J_N$ holomorphic spheres in the class of
a fiber $F= [\CC P^1(\infty)] - [E]$ of the bundle $X \to E$.

We will follow the method of \cite{hwzfol} to construct a foliation of
$X \setminus L$ by finite energy curves.  To this end,
we fix a countable dense set of points $\{p_i\} \in X$.  For each $N$
there exists a unique curve $C^N_i$ in the fiber class intersecting $p_i$.
Taking a diagonal subsequence of $N \to \infty$ we may assume that all
$C^N_i$ converge as $N \to \infty$ to holomorphic buildings having
a component $C_i$ passing through $p_i$. Positivity of intersection
implies that these curves are either disjoint or have identical image.
By taking further limits of the $C_i$ we obtain a finite energy foliation
${\cal F}$ of $X \setminus L$, see \cite{hwzfol}. We describe this foliation in section \ref{folnxl}. The limits of the $C^N_i$ also have components in $T^* L$ and $\R \times \Sigma$. These curves are described in section \ref{tls}.


\subsection{Limit curves in $X \setminus L$} \label{folnxl}

\begin{proposition}\label{foliationF}
  The leaves of ${\cal F}$ in $X \setminus L$
	consist of three kinds of curves:
\begin{enumerate}
\item Embedded closed spheres in the fiber class.
\item Planes $C_0$ of degree $0$ asymptotic to $(1,0)$ geodesics.
\item Planes $C_1$ of degree $d=1$ with $e = C_1 \bullet E =1$
	and asymptotic to $(-1,0)$ geodesics.
\end{enumerate}
These leaves foliate $X \setminus L$.
\end{proposition}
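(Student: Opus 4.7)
The plan is to apply the SFT compactness theorem of \cite{BEHWZ} to the sequence $C^N_i$ of $J^N$-holomorphic fiber curves from Proposition \ref{fiberfoln}, obtaining a limit holomorphic building $\mathbf{C}_i$. I would focus on classifying the components of $\mathbf{C}_i$ that lie in $X \setminus L$. By conservation of the homology class $F = [\CC P^1(\infty)] - [E]$, the total degree of these components is $1$, their total intersection with $E$ is $1$, and, by section \ref{area}, their total area is $\omega_R(F) = R - 1 < 2$.

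The key technical input is the area formula for a component $A \subset X \setminus L$ of degree $d$, intersection number $e$ with $E$, and negative ends asymptotic to orbits in classes $(k_j, l_j) \in H_1(L)$:
\[
\text{area}(A) \ =\ d R - e + \sum_j (k_j + 2 l_j).
\]
I would derive this by capping each negative end with an oriented surface in $P(1,2)$ built from $k_j$ copies of the holomorphic disk $D(1) \times \{q\}$ (area $1$, bounding class $(1,0)$) and $l_j$ copies of $\{p\} \times D(2)$ (area $2$, bounding $(0,1)$); these caps are disjoint from $\CC P^1(\infty)$ and from $E$, so they leave the intersection numbers unchanged.

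Then I would enumerate the possible configurations. Positivity of intersection with $[\CC P^1(\infty)]$ and $[E]$ forces $d_j, e_j \geq 0$ for each component (covers of $E$ have $e < 0$ but negative area and are excluded). Combined with $\sum d_j = \sum e_j = 1$ and the tight area budget $R - 1 < 2$, this sharply restricts the possibilities. An unbroken limit gives a closed sphere in the fiber class, which is a leaf of type (1). A two-component breaking with data $(d,e) = (0,0)$ and $(1,1)$ gives planes $C_0$ and $C_1$ matched via orbits in classes $(k,l)$ and $(-k,-l)$ respectively; area positivity for both forces $0 \leq k + 2l \leq R - 1 < 2$, i.e.\ $k + 2l \in \{0, 1\}$. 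The value $0$ is ruled out by non-degeneracy of the asymptotic orbit, while $k + 2l = 1$ admits candidates $(1,0)$, $(-1,1)$, $(3,-1)$, and so on. I would exclude the exotic classes $(k,l) \neq (1,0)$ using the Fredholm indices from Proposition \ref{out}, the existence requirement for matching lower-level buildings in $\RR \times \Sigma$ and $T^* L$, and the structure of the initial fiber foliation. Analogous reasoning would exclude breakings with three or more $X \setminus L$ components.

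Finally, that the leaves form a foliation of $X \setminus L$ follows from Siefring's extended intersection theory (section \ref{siefring}): for distinct limit components $A, A'$ arising from nearby $p_i$, positivity $A \star A' \geq 0$ combined with $F \bullet F = 0 \geq \sum_i A_i \star A'_i$ forces $A \star A' = 0$, so the leaves are pairwise disjoint. Taking limits through a dense set of points $\{p_i\}$ and applying the standard argument of \cite{hwzfol} produces the full finite energy foliation. The main obstacle I expect is the rigorous exclusion of the exotic asymptotic classes $(k, l) \neq (1, 0)$ with $k + 2l = 1$ and of multi-component breakings beyond $C_0 + C_1$; these will require combining the area and index bounds with a careful analysis of which lower-level buildings in $T^* T^2$ and $\RR \times T^3$ can actually occur as caps.
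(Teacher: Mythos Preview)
Your overall strategy matches the paper's: take SFT limits of fiber spheres, use the area formula and the integrality of the area of degree-$0$ pieces (total area $R-1<2$) to force exactly two components in $X\setminus L$, then use genus $0$ plus the absence of contractible geodesics to see that both are planes. The exclusion of the exotic classes $(-1,1),(3,-1),\dots$ in your Case is in fact immediate from the index constraints alone: $\mathrm{index}(C_0)=-1+2(k+l)\ge 0$ and $\mathrm{index}(C_1)=3-2(k+l)\ge 0$ force $k+l=1$, and together with $k+2l=1$ this gives $(k,l)=(1,0)$. You do not need any analysis of lower levels for this.

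There is, however, a genuine gap. You only treat the splitting $(d,e)=(0,0)$ and $(1,1)$ for the two planes, but positivity of intersection allows equally well the splitting $(d,e)=(0,1)$ and $(1,0)$, i.e.\ the degree-$0$ plane $C_0$ hits $E$ once and the degree-$1$ plane $C_1$ misses $E$. In that case the area and index constraints read
\[
\mathrm{area}(C_0)=k+2l-1=1,\qquad \mathrm{index}(C_0)=-3+2(k+l)\ge 0,\qquad \mathrm{index}(C_1)=5-2(k+l)\ge 0,
\]
which give $(k,l)=(2,0)$. This is \emph{not} excluded by area or index, and the paper needs a separate geometric argument: such a $C_0$ is embedded (it is somewhere injective and inherits embeddedness from the approximating closed spheres), satisfies automatic transversality, and hence lives in a $1$-parameter family hitting every $(2,0)$ geodesic. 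Blowing down $E$ produces a $1$-parameter family of planes in $\C^2\setminus L$ asymptotic to $(2,0)$ geodesics, all passing through the blow-down point; but two such planes have intersection number $0$ (they are homologous rel boundary to double covers of $D(1)\times\{p\}$), contradicting positivity of intersection. You should add this case and this argument.
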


\begin{proof}
We pick a point $p \in X \setminus L$ and look at a limit of
$J^N$-holomorphic spheres $C^N$ through $p$. The component of the limit
through $p$ is the leaf of ${\cal F}$ through $p$.  If $C^N$ converges to
a closed curve then it is of the first type. Nodal curves in $X \setminus L$ are excluded as in section \ref{closed}.

Now suppose that the limit is a (non-trivial) holomorphic building, $B$.
Observe that the symplectic area of each of the $C^N$ is $R-1$, and thus by our assumption that $R<3$ the
sum of the areas of the components in $X \setminus L$ is $R-1<2$ (as symplectic area is preserved in the limit, see the discussion in section \ref{area}).
Also recall that for any curve $C$ in $X \setminus L$, of degree $d$ and
intersection number $e$ with the exceptional divisor,
and negative ends asymptotic to geodesics in classes $(k_i, l_i)$, we can compactify to a symplectic surface with boundary on the relevant geodesics (see again section \ref{area}) and then
the area is given by
\begin{equation} \label{areafor}
\mathrm{area}\:(C) = Rd - e + \sum (k_i + 2 l_i).
\end{equation}

By positivity of intersection exactly curve of $B$ in $X \setminus L$ has degree $1$.
By formula (\ref{areafor}) the degree $0$ components have positive
integral area, and so as the total area is $R-1 <2$ there can be at most one of these. Thus, for the building $B$ to be non-trivial
(i.e.~not a closed curve)
there must be exactly two components in $X \setminus L$,
one of area $1$ and the other of degree $1$ and area $R-2$.
Also by our area bounds we see that neither component can contain a
closed curve. As $B$ is a limit of curves of genus $0$, after identifying matching ends the building itself must have genus $0$.
Thus, as there are no finite energy planes in $T^* L$ (as there are no
contractible geodesics) we conclude that that two components must both be planes, that is, have a single negative end.

We denote these two planes by $C_0$ and $C_1$,
where $C_0$ is of degree $0$ and asymptotic to a $(k,l)$
geodesic and $C_1$ is of degree $1$. As $C_0$ and $C_1$ are components of a holomorphic building whose other components lie in $T^* L$ we necessarily have that $C_1$ is asymptotic to a $(-k,-l)$
geodesic.

Note that by the area condition, $C_0$ must be somewhere injective
and that $C_1$ must be somewhere injective since it is of degree $1$.

By positivity of intersection exactly one of $C_0$ or $C_1$ can intersect the exceptional divisor $E$ and so we reduce to two cases.

{\bf Case $1$: $C_0 \bullet E =0$; $C_1 \bullet E =1$.}

Since the curves are somewhere injective, their Fredholm indices must be
non-negative. Therefore
\begin{alignat*}{1}
\mathrm{index}(C_0) &= -1 + 2(k+l) 	\ge 0 \\
\mathrm{area}(C_0)  &= k+2l 		=1. \\
\mathrm{index}(C_1) &= 3 - 2(k+l) 	\ge 0 \\
\mathrm{area}(C_1)  &= R-1-(k+2l) 	= R-2.
\end{alignat*}
Solving these inequalities gives $k=1$, $l=0$ and we get curves of types $(2)$ and $(3)$.

{\bf Case $2$: $C_0 \bullet E =1$; $C_1 \bullet E =0$.}

Now we compute
\begin{align*}
\mathrm{index}(C_0) &= -3 + 2(k+l) \ge 0\\
\mathrm{area}(C_0)  &=k+2l-1=1. \\
\mathrm{index}(C_1) &= 5 - 2(k+l) \ge 0\\
\mathrm{area}(C_1)  &=R-(k+2l)=R-2.
\end{align*}

Solving these we get $k=2$, $l=0$. Note that $C_0$ is somewhere injective.
If $C_0$ failed to be embedded, it would either have an isolated
non-immersed point or an isolated self-intersection, the existence of either
of which would imply the existence of a non-immersed point or of a
self-intersection of the embedded curves $C^N$ for $N$ sufficiently large.
Thus, $C_0$ is an embedded curve asymptotic to a $(2,0)$ geodesic.

We now claim no such plane exists. Applying  automatic
transversality \cite{wendl} to the problem of finding nearby planes with
the same asymptotic limits, we see that $C_0$ comes in a local $1$-parameter family,
with precisely one member asymptotic to each geodesic in the $S^1$ family.
But blowing down the exceptional divisor would give us a $1$-parameter
family of planes in $\CC ^2 \setminus L$ asymptotic to $(2,0)$ geodesics
and all intersecting at a point. But such planes
have intersection number $0$, indeed, such planes are homologous relative to their boundaries to double covers of disks $D(1) \times \{p\}$ in the boundary of the polydisk. Hence we have a contradiction to positivity of intersection.


Excluding Case $2$ completes the proof of Proposition \ref{foliationF}.
\end{proof}

The following lemma gives further structure of $\mathcal F$.

\begin{lemma} \label{L:uniquePlane}
	The foliation $\mathcal F$ contains a unique leaf of degree $1$
	asymptotic to each $(-1,0)$ geodesic and a unique leaf of
	degree $0$ asymptotic to each $(1,0)$ geodesic.
\end{lemma}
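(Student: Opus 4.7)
The plan is to establish bijectivity of the evaluation maps $\mathrm{ev}_d \colon \mathcal M_d \to S^1$ for $d=0,1$, where $\mathcal M_d$ denotes the space of leaves of $\mathcal F$ of degree $d$ (from Proposition~\ref{foliationF}) and $\mathrm{ev}_d$ sends a leaf to its asymptotic geodesic in the $S^1$-family of $(1,0)$- or $(-1,0)$-geodesics, respectively.

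\textbf{Setting up the moduli.} The planes $C_0$ and $C_1$ are embedded, somewhere injective and have Fredholm index $1$ by the computation in the proof of Proposition~\ref{foliationF}. Automatic transversality \cite{wendl} then makes $\mathcal M_0$ and $\mathcal M_1$ smooth $1$-manifolds, and asymptotic evaluation provides smooth maps $\mathrm{ev}_d$.

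\textbf{Injectivity.} Suppose, for contradiction, that $C, C' \in \mathcal M_d$ are two distinct leaves asymptotic to the same geodesic $\gamma$. Their images are disjoint, as they are distinct leaves of a foliation. Pick base points $p \in C$ and $p' \in C'$ that do not lie on a common $J^N$-fiber, and let $F^N_p, F^N_{p'}$ be the fiber-class leaves through them. Since distinct fibers satisfy $F^N_p \bullet F^N_{p'} = F \cdot F = ([\CC P^1(\infty)] - [E])^2 = 0$, and these curves converge as $N \to \infty$ to buildings whose top levels in $X \setminus L$ contain $C$ and $C'$ respectively, Theorem~\ref{T:intersectionNumber} gives
\[
	0 = F\cdot F \ \geq\ C \star C' + \text{(non-negative contributions from other component pairs)},
\]
so $C \star C' \le 0$. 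On the other hand, by Theorem~\ref{T:asymptotic} applied to the two ends on $\gamma$, the relative asymptotic development of $C$ and $C'$ near their common puncture is controlled by a non-trivial leading eigenvector of the asymptotic operator associated to a positive eigenvalue. The winding of this eigenvector, computed from the Conley-Zehnder data of the $(1,0)$-orbit supplied in Appendix~\ref{A:RiemannRoch}, contributes strictly positively to the Siefring intersection, yielding $C \star C' \ge 1$. This contradicts the previous inequality, so $\mathrm{ev}_d$ is injective.

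\textbf{Surjectivity.} By Proposition~\ref{foliationF}, $\mathcal M_d$ is non-empty. An injective smooth map from a $1$-manifold to $S^1$ is a local diffeomorphism, so $\mathrm{ev}_d(\mathcal M_d)$ is open in $S^1$. It is also closed: given a sequence $C^{(n)} \in \mathcal M_d$ with $\mathrm{ev}_d(C^{(n)}) \to \gamma_\infty$, SFT compactness yields a limiting building of the same area with one negative end on $\gamma_\infty$; the area/index analysis used in the proof of Proposition~\ref{foliationF} (in particular that no holomorphic plane in $X \setminus L$ carries positive area strictly less than $\mathrm{area}(C)$) forces this building to have a single component in $X \setminus L$, itself a leaf of $\mathcal M_d$ asymptotic to $\gamma_\infty$. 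Connectedness of $S^1$ then gives $\mathrm{ev}_d(\mathcal M_d) = S^1$.

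\textbf{Main obstacle.} The core technical point is the positivity of the asymptotic Siefring contribution at a shared simple Morse-Bott orbit. This requires identifying the winding of the leading eigenvector of the asymptotic operator at a $(\pm 1, 0)$-geodesic in $T^3$, which rests on the Riemann-Roch/Conley-Zehnder bookkeeping set up in Appendix~\ref{A:RiemannRoch}.
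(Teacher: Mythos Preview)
Your overall strategy matches the paper's: automatic transversality for the moduli structure, and the asymptotic winding computation from Appendix~\ref{A:RiemannRoch} to force $C \star C' \ge 1$ when two planes share an asymptotic orbit. The differences, and the places where your argument is not quite closed, are as follows.

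\textbf{The upper bound on $C \star C'$.} To obtain the contradiction you need $C \star C' \le 0$. The paper does this in one line via homotopy invariance of $\star$: move $u$ through the $1$--parameter family of planes (produced by automatic transversality) to a nearby plane $u'$ whose asymptotic orbit is \emph{different} from that of $v$; then $u \star v = u' \star v = u' \bullet v = 0$, since $u'$ and $v$ are distinct leaves of the foliation and hence disjoint. Your route through the SFT degeneration inequality is valid in principle but is more delicate than you indicate. The limiting buildings of $F^N_p$ and $F^N_{p'}$ each contain, in addition to $C$ and $C'$, a degree~$1$ plane ($D$ and $D'$) and a cylinder in $T^*L$; by Corollary~\ref{symmetricfoliation} these are asymptotic to the \emph{same} $(-1,0)$ orbit, so $D$ and $D'$ may well coincide (and similarly for the cylinders). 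In that case the ``other component pairs'' include a genuine self-pairing $D \star D$, and the non-negativity clause of Theorem~\ref{T:intersectionNumber} does not apply. One can still show $D \star D \ge 0$ (again by deforming to a nearby leaf), but this has to be argued, and once you are doing that you are essentially carrying out the paper's direct homotopy argument anyway.

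\textbf{Surjectivity.} Your assertion that an injective smooth map of $1$--manifolds is automatically a local diffeomorphism is false (think of $t \mapsto t^3$). What you actually need is invariance of domain, which gives openness of the image directly from continuity and injectivity. The paper instead shows the asymptotic evaluation map is a \emph{submersion}: compactness (no breaking, for area reasons) makes $\mathcal M_d$ a finite union of circles, and the $1$--dimensional kernel of the linearized operator is seen to project nontrivially onto the tangent space of the Morse--Bott $S^1$ family, so the evaluation map is a surjective local diffeomorphism. Also note that automatic transversality applies to the moduli space of \emph{all} holomorphic planes of the given type, not a~priori to the subset consisting of leaves of $\mathcal F$; the identification of the two is a consequence of the lemma, not an input to it.
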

\begin{proof}
	Note that the curves $C_0$ and $C_1$ both satisfy automatic
	transversality \cite{wendl}*{Theorem 1}.
Their respective moduli spaces are compact since no further breaking can occur
for energy reasons, and thus these moduli spaces consist of a collection of
circles. The kernel of their linearized operators is $1$ dimensional, and must
project non-trivially in the direction tangent to the space of Morse-Bott
orbits, so the asymptotic evaluation map must be surjective.

It remains to show that no two leaves are asymptotic to the same orbit in class
$(1,0)$ (respectively $(-1,0)$). We consider the case of the $(1,0)$ geodesic ---
the same argument applies for the $(-1,0)$ geodesic.

Suppose that both $u$ and $v$ are distinct
degree $0$ planes asymptotic to the same orbit
in the class $(1,0)$. These curves do not intersect as they are leaves in the
foliation. By the asymptotic convergence formula in Theorem \ref{T:asymptotic},
the two curves can each be represented near their ends as a graph over the asymptotic limit.
The difference between them is then of the form
\[
U(s,t) - V(s,t) = \e^{\lambda s}( e(t) + r(s,t)),
\]
where $\lambda$ is a positive eigenvalue of the asymptotic operator associated
to the closed Reeb orbit.
As computed in Appendix \ref{A:RiemannRoch}, the eigenvectors corresponding
to positive eigenvalues have winding number at least one with respect
to the framing given by the $S^1$ family of orbits. Pushing $u$ off
in the constant direction then introduces an additional intersection
point, making $u \star v = 1$. The homotopy invariance, however, gives $u \star
v = 0$. Thus, no two planes in the family can asymptote to the same orbit.

\end{proof}

\subsection{Limiting curves in $T^* L$ and $\R \times \Sigma$.} \label{tls}

\begin{proposition} \label{otherlimits}
Limits of the $C^N_i$ in $T^* L$ and $\R \times \Sigma$ are all cylinders which either have positive ends asymptotic to Reeb orbits of type $(1,0)$ and $(-1,0)$, or lie in $\R \times \Sigma$ and are trivial cylinders over an orbit of type $(1,0)$ or $(-1,0)$.

\end{proposition}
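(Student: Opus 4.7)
\noindent\emph{Proof plan.}
The strategy is to combine a combinatorial count forcing every component of the neck sub-building to be a cylinder, a homological tracking of Reeb-orbit classes along the resulting tree, and an action/area argument to identify the intermediate cylinders as trivial. By Proposition \ref{foliationF}, when the limit of $C^N_i$ is a non-trivial building, its part in $X \setminus L$ consists of the two planes $C_0$ and $C_1$, asymptotic respectively to a $(1,0)$ and a $(-1,0)$ geodesic. Hence the sub-building $B'$ of components lying in $T^*L \cup (\R \times \Sigma)$ has exactly two external positive ends at the top, carrying the primitive classes $(1,0)$ and $(-1,0)$; since the $C^N_i$ are spheres, $B'$ is a connected tree of genus-$0$ components.

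First I would exclude closed components and finite energy planes from $B'$. Closed pseudoholomorphic curves in $T^*L$ and $\R \times T^3$ are ruled out because the symplectic form is exact and $J$-tame, so Stokes' theorem forces a closed curve to have zero area while $u^* \omega \ge 0$ pointwise would give strictly positive area. Planes are excluded because every Reeb orbit in a class $(k,l) \ne (0,0)$ is non-contractible in $T^3$, in $T^*T^2$, and in $\R \times T^3$. Thus every component of $B'$ has at least two punctures. If $B'$ has $n$ components, its tree has $n-1$ internal edges and $2$ external ends, so the total puncture count is $2(n-1)+2 = 2n$, which forces every component to be a sphere with exactly two punctures, i.e.\ a cylinder. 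The two external ends then sit at two distinct cylinders, so the tree is a path between them. Tracking the $H_1$ class along the path---using that a cylinder with two ends of the same sign has its end classes summing to zero, while a $(1^+,1^-)$ cylinder in $\R \times \Sigma$ has both ends in the same class---one sees that the path must descend to a $(2^+, 0^-)$ cylinder at its minimum level (no negative end is available to drop further), whose two positive ends are then in opposite classes $(1,0)$ and $(-1,0)$, with chains of $(1^+,1^-)$ cylinders of class $(\pm 1,0)$ on either side.

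The main step, which I expect to be the hardest, is to identify every intermediate $(1^+, 1^-)$ cylinder $u$ in $\R \times \Sigma$ as a trivial cylinder. By the class tracking both ends of $u$ lie on orbits in a common primitive class $(\pm 1, 0)$, and because the flat metric on $L = T^2$ assigns a common length to every geodesic in a fixed class, the two asymptotic orbits have equal actions $T^+ = T^-$. Stokes' theorem on the compactified cylinder then gives
\[
\int u^* d\alpha \;=\; T^+ - T^- \;=\; 0,
\]
where $\alpha$ is the contact form on $\Sigma$. The pointwise inequality $u^* d\alpha \ge 0$ for a cylindrical almost complex structure adapted to $\alpha$ forces $u^* d\alpha \equiv 0$, which confines the image of $u$ to a single Reeb trajectory $\R \times \gamma$. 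Hence $u$ is a (possibly multiply covered) trivial cylinder; primitivity of the end classes $(\pm 1, 0)$ rules out non-trivial covers and promotes this to a simple trivial cylinder over a $(1,0)$ or $(-1,0)$ orbit, completing the classification.
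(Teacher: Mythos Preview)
Your proof is correct and follows the same line as the paper's (genus $0$ plus non-contractibility of Reeb orbits forces every component to be a cylinder; $H_1$-tracking then pins down the asymptotic classes), only spelled out in far more detail---in particular, the paper simply asserts ``the only possibilities are then as described,'' whereas you supply the Stokes/action argument identifying each $(1^+,1^-)$ neck cylinder as trivial. One small omission: to conclude that the path has a single $(2^+,0^-)$ turn-around with only $(1^+,1^-)$ cylinders elsewhere, you should also note that $(0^+,2^-)$ cylinders in $\R\times\Sigma$ are impossible by the same action inequality (they would give $\int u^*d\alpha = -(T_1+T_2) < 0$); without this, the level along the path could in principle oscillate.
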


\begin{proof}
As our limiting building has genus $0$ and, by Proposition \ref{foliationF} (in the case when the limit is not a closed curve), contains exactly two curves in $X \setminus L$ asymptotic to orbits of type $(1,0)$ and $(-1,0)$, the limiting components in $T^* L$ and $\R \times \Sigma$ must fit together along their matching ends to form a cylinder with ends asymptotic to $(1,0)$ and $(-1,0)$ orbits.

As there are no contractible orbits, this implies that all curves in $T^* L$ and $\R \times \Sigma$ are cylinders and every asymptotic limit is of the type described. The only possibilities are then as described in the proposition.
\end{proof}

Note that the nature of the curves in $\R \times \Sigma$ and $T^*L$
 does not require genericity of the almost complex
structure $J$. Therefore we can choose a $J$ as in \cite{mike} so as to conclude the following.

\begin{corollary} \label{symmetricfoliation} (\cite{mike}, section $10$.)

The almost complex structure on $T^*L$ may be chosen so that the limiting
cylinders in $T^*L$ and $\R \times \Sigma$ have both ends asymptotic to
a lift of the same geodesic in $T^2 = L$.

\end{corollary}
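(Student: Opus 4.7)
We follow the strategy of Hind \cite{mike}. The plan is to choose the almost complex structure $J$ on $T^*L$ to be the standard integrable one: identifying $T^*L = T^*T^2$ with $(\CC/\ZZ)^2$ via the complex coordinates $z_j = q_j + i p_j$, we take $J$ to be multiplication by $i$. This $J$ is compatible with the canonical symplectic form and is invariant under translation on the base $T^2$. We extend $J$ to a translation-invariant, compatible almost complex structure on $\R \times \Sigma$ that matches $J$ near $\Sigma$, and adjust the $J^N$ accordingly in the neck-stretching procedure so that the limiting $J$ on each level has the desired properties.

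The main analytic step is to analyze a cylinder in $T^*L$. Let $u = (u_1, u_2) \colon \R \times S^1 \to (\CC/\ZZ)^2$ be a finite energy $J$-holomorphic cylinder component of our building, with ends asymptotic to Reeb orbits in classes $(1,0)$ and $(-1,0)$ respectively (the only possibilities by Proposition \ref{otherlimits}). Since both asymptotic classes have $l = 0$, the second coordinate $u_2$ has zero winding on each loop $\{s\} \times S^1$, hence lifts to a holomorphic function $\tilde u_2 \colon \R \times S^1 \to \CC$. By Theorem \ref{T:asymptotic}, $\tilde u_2$ converges exponentially to constants $c_\pm \in \CC$ at the two punctures, since the asymptotic orbits have $p_2 = 0$ and constant $q_2$. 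Passing to the conformal model $\CC^*$ of the cylinder and extending $\tilde u_2$ to $S^2 = \CC P^1$ by setting its values at $0$ and $\infty$ to $c_\mp$, we obtain a holomorphic map $S^2 \to \CC$. Any such map is constant, forcing $c_+ = c_-$, so both asymptotic orbits project to the same geodesic $\{q_2 = \text{const}\}$ in $T^2$.

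The remaining components of the building lie in $\R \times \Sigma$. Trivial cylinders over a single orbit automatically satisfy the conclusion. The only other possibility from Proposition \ref{otherlimits} is a cylinder with a $(1,0)$ orbit at one end and a $(-1,0)$ orbit at the other; since the asymptotic ends must match across levels in the building, its two ends are forced to agree (via this chain of matches) with those of the $T^*L$ level, and hence to lie over the same geodesic.

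The \emph{main obstacle} is arranging the extension of $J$ to $\R \times \Sigma$ so that translation invariance, compatibility with the contact form, and smooth matching with the integrable $J$ on $T^*L$ coexist with sufficient genericity for the Fredholm theory used earlier; this construction is carried out in \cite{mike}, Section 10, and transfers directly to our setting.
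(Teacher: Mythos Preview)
The paper itself gives no real proof here; it simply observes that Proposition~\ref{otherlimits} requires no genericity, so one may take the $T^2$-invariant $J$ of Hutchings--Sullivan \cite{mike} (not Hind), whose Section~10 classifies finite energy cylinders in $\R\times T^3$ and $T^*T^2$ and shows they project to a single geodesic. Your write-up instead proposes a specific argument, and that argument has two genuine gaps.

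First, the integrable $J$ on $T^*T^2\cong(\C/\Z)^2$ is not translation-invariant on the cylindrical end: the Liouville flow $(q,p)\mapsto(q,e^s p)$ does not preserve the splitting $J\partial_{q_j}=\partial_{p_j}$ (equivalently, one computes $J(R)=e^{-t}\partial_t$ rather than $\partial_t$). Hence this $J$ does not fit the SFT framework of Section~\ref{stretch}, and your curve $u$ is not holomorphic for it; the clean ``$u_2$ is a bounded holomorphic function on $S^2$'' step does not apply. The $J$ actually used in \cite{mike} is cylindrical and $T^2$-invariant, and the argument there analyzes the projection to the $\theta$-circle rather than extracting a global holomorphic coordinate function.

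Second, your treatment of the $\R\times\Sigma$ case is circular. By Proposition~\ref{otherlimits} a limiting building may have its unique non-trivial cylinder (the one with two positive ends of types $(1,0)$ and $(-1,0)$) sitting in some middle level of $\R\times\Sigma$, with \emph{no} $T^*L$ level at all; the remaining levels are then trivial cylinders over $(\pm 1,0)$ orbits. In that situation there is no ``$T^*L$ level'' for the ends to match with, so your matching argument does not determine the $q_2$-values of the two asymptotic orbits. One needs an intrinsic argument for cylinders in $\R\times\Sigma$, which is again what \cite{mike} supplies via the $T^2$-invariant $J$.
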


%
%

\begin{remark}
	The construction of this foliation depended quite heavily on
	our assumption that the embedding of $P(1,2)$ is in $CP^2(R)$
	for $R < 3$. If we considered $R > 3$, many other buildings could
	arise in the foliation. In particular, we would expect to see an $S^1$
	family of buildings with levels in $X \setminus L$ consisting
	of a degree $1$ plane asymptotic to a
	$(-1,-1)$ geodesic and a degree $0$ plane intersecting $E$,
	asymptotic to a $(1,1)$ geodesic.
	The key idea of our argument is that the non-existence of these
	curves is what provides the contradiction later.
\end{remark}

\section{Degree $d$ curves with point constraints} \label{highdegree}

In this section, to complete a proof by contradiction of Theorem \ref{main}, we will consider the SFT limits of curves of degree $d$ as we
stretch the neck under the assumption that $R < 3$.

By Proposition \ref{highdeg}, to any set of $2d$ points in sufficiently general position $\{q_i\} \in X$
there exists a unique embedded $J^N$-holomorphic curve $\highdeg^N$ of degree $d$ and with $e
= \highdeg^N \bullet E = d-1$ intersecting each of the $2d$ points $\{q_i\}$. We
also observe that $\highdeg^N \bullet F =1$. Let
us fix the $2d$ points in a the small Weinstein neighbourhood $U$ of $L$. As the cylinders described by Proposition \ref{otherlimits} have deformation index at most $2$, we may assume that each such cylinder intersects at most one of the points.

We now take the limit as $N \rightarrow \infty$. As described in section \ref{indices} the limiting holomorphic
building will have components in $X \setminus L$, in $\R \times \Sigma$
and in $T^*L$.
Denote the building by $F$ and let $F_0$ denote the components of the building
in $X \setminus L$.

We will study $F_0$ using the foliation $\mathcal F$ described by Proposition \ref{foliationF}. A generic curve in the foliation is a closed curve in the fiber class, but such curves can converge to {\it broken configurations}, that is, the union of a curve of type (2) and (3) from Proposition \ref{foliationF} asymptotic to the same geodesic with opposite orientations, see Corollary \ref{symmetricfoliation}.

\begin{lemma} \label{outside}
	All components of $F_0$ except for one are (possibly branched) covers
	of curves in the foliation $\mathcal F$.
	Denote by $f_0 \subset F_0 \subset F$ the component that is not a cover.
	
For each closed curve $C$ in $\mathcal F$,
	the intersection number $f_0 \bullet C = 1$.
	For the broken configurations in  $\mathcal F$, we have
	$f_0 \star C_0 + f_0 \star C_1 = 1$.

	Furthermore, the component $f_0$ is of some degree $d'$ with
	$1 \le d' \le d$ and has intersection number with $E$ given by
	$e'=f_0 \bullet E = d' -1$.
\end{lemma}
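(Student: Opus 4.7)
My plan is to use the intersection number $\highdeg^N \bullet F_{\mathrm{fib}} = 1$, where $F_{\mathrm{fib}} = [\CC P^1(\infty)] - [E]$ is the fiber class, combined with Siefring's intersection inequality in the limit, to force all but one component of $F_0$ to be a cover of a leaf of $\mathcal F$. A direct computation gives $\highdeg^N \bullet F_{\mathrm{fib}} = d - (d-1) = 1$, so by Theorem \ref{T:intersectionNumber}, for each leaf $\ell$ of $\mathcal F$ the sum of Siefring $\star$-intersections between matching levels of $F$ and of $\ell$ is at most $1$.

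Next I would classify each component $u$ of $F_0$ using its degree $d_u$ and $e_u := u \bullet E$. The algebraic identity $u \bullet F_{\mathrm{fib}} = d_u - e_u$ equals the geometric intersection of $u$ with any closed leaf. Every leaf of $\mathcal F$ satisfies $d = e$ homologically: closed fiber spheres lie in class $F_{\mathrm{fib}}$, the plane $C_1$ of a broken leaf has $(d,e) = (1,1)$, and $C_0$ has $(d,e) = (0,0)$. Hence any cover of a leaf contributes $d_u - e_u = 0$. Conversely, if $u$ is not a cover of any leaf, it cannot be contained in a single leaf, so for a generic closed leaf $C$ it is geometrically distinct from $C$ with $u \cap C \neq \emptyset$; positivity of intersection then forces $d_u - e_u = u \bullet C \ge 1$.

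Summing over components of $F_0$ yields $\sum_u (d_u - e_u) = d - (d-1) = 1$. Combined with the Siefring upper bound of $1$, there is exactly one non-cover component $f_0$, with $d' - e' = 1$ (i.e., $e' = d' - 1$) and $f_0 \bullet C = 1$ for every closed leaf. The identity $f_0 \star C_0 + f_0 \star C_1 = 1$ for broken leaves then follows from saturation of the Siefring upper bound together with the vanishing of every other contribution: cover components of $F_0$ contribute $0$ as above, and by Proposition \ref{otherlimits} and Corollary \ref{symmetricfoliation} the matching levels of a broken leaf in $\RR \times \Sigma$ and $T^* L$ are trivial cylinders over a single lift of a geodesic and so pair trivially with curves asymptotic to different orbits.

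The principal obstacle will be establishing $d' \ge 1$, i.e., excluding the possibility that $f_0$ is a multiple cover of $E$: such a cover satisfies $d_u = 0$, $e_u = -k < 0$ and $d_u - e_u = k$, and the case $k = 1$ is compatible with every homological and intersection-theoretic constraint above. To rule it out I would use that $E$ carries no cylindrical ends, so an $E$-component of $F$ must attach to the rest of the building at an interior node in $X \setminus L$; positivity of intersection at this node, combined with the embeddedness of $\highdeg^N$ from Proposition \ref{highdeg}, produces a contradiction with the approximating sections. The remaining bound $d' \le d$ is immediate from $d' \le \sum_u d_u = d$ and non-negativity of component degrees.
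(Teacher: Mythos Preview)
Your overall strategy coincides with the paper's: use $\highdeg^N \bullet F_{\mathrm{fib}} = 1$, positivity of intersection, and the fact that $\mathcal F$ foliates $X \setminus L$ to single out one component $f_0$ with $f_0 \bullet C = 1$ and force the others to cover leaves; your computation of $e' = d' - 1$ via the quantities $d_u - e_u$ is the paper's final paragraph in different words.

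There is, however, a genuine gap in your treatment of the broken-configuration identity $f_0 \star C_0 + f_0 \star C_1 = 1$. You invoke ``saturation of the Siefring upper bound'', but the last clause of Theorem \ref{T:intersectionNumber} is only an \emph{inequality}, and nothing you have written forces it to be sharp. Worse, the sum in that inequality includes $\star$-contributions from the $\RR \times \Sigma$ and $T^*L$ levels of $F$, whose structure is not yet known at this stage of the argument (Lemma \ref{inside} is proved later and uses the present lemma), so your claim that these ``pair trivially'' is circular. At best your route yields $f_0 \star C_0 + f_0 \star C_1 \le 1$. The paper obtains equality by a different mechanism: since $C_0$ and $C_1$ satisfy automatic transversality, they can be pushed off \emph{holomorphically} within the Morse--Bott $S^1$ family to planes $C_0', C_1'$ whose asymptotic limits are disjoint from those of $f_0$; by the very definition of $\star$ as a push-off tangent to the family, $f_0 \star C_i = f_0 \bullet C_i'$, and then $f_0 \bullet C_0' + f_0 \bullet C_1' = f_0 \bullet C = 1$ for any nearby closed fiber $C$, using the constancy of intersection numbers across the foliation (which the paper establishes first).

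You are right to flag the possibility that $f_0$ is a cover of $E$ (giving $d' = 0$, $e' = -1$); the paper's proof does not address this explicitly. Your proposed fix is not adequate as stated, though: a sequence of embedded spheres can perfectly well Gromov-converge to a nodal limit containing $E$ as a bubble, so embeddedness of $\highdeg^N$ by itself does not produce a contradiction via positivity at the node.
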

\begin{proof}

Fix a component $f$ of the building. For a closed curve $C$ of $\mathcal F$ the intersection number $C \bullet f$ is well defined. We now consider the intersection
number $C' \bullet f$ as we vary $C'$
through the moduli space of curves in $\mathcal F$.
These intersection
numbers $C' \bullet f$ are locally constant among all deformations of $C'$
within the foliation, including through broken configurations, as long as
the breaking occurs at orbits geometrically distinct from the asymptotic
orbits of $f_i$. Broken configurations have codimension $1$ and broken configurations along an asymptotic limit of $f$ have codimension $2$ in the $2$-dimensional space of leaves of $\mathcal F$.
Hence, for any closed curve $C \in \mathcal F$, $C \cdot f$
does not depend on the choice of curve $C$, and for a broken configuration with asymptotics disjoint from those of $f$ and close to a closed curve $C$ we have
\begin{equation} \label{E:intersection1}
	f \bullet C_0 + f \bullet C_1 = f \bullet C.
\end{equation}

Let now $C_0 \cup C_1$ be a broken configuration with asymptotic ends coinciding
with the ends of a component $f$. We then have
$f \bullet C_0 + f \bullet C_1 \le f \bullet C$, where $C$ is a nearby
closed curve in $\mathcal F$. However, for the Siefring-Wendl
 extended intersection number (Theorem \ref{T:intersectionNumber}),
we have
\begin{equation} \label{E:intersection}
	f \star C_0 + f \star C_1 = f \star C = f \bullet C.
\end{equation}
In order to keep the exposition self-contained, we now briefly explain
why this identity holds for the extended intersection number.
Recall from the discussion following Theorem \ref{T:intersectionNumber},
that in our case with orbits in $\mathbb{T}^3$, the
extended intersection number
is obtained by perturbing $C_0$ and $C_1$ off of their asymptotic limits
in the direction of the $S^1$ family of closed Reeb orbits.
Now, the planes $C_0$ and $C_1$ satisfy the automatic transversality criterion
of \cite{wendl}. It follows that in their moduli spaces the asymptotic limits move in the $S^1$
Morse-Bott family of Reeb orbits in class $(1,0)$ or $(-1,0)$, and thus
the push-off of $C_0$ and of $C_1$ can be realized by holomorphic planes
$C_0'$ and $C_1'$, with asymptotic limits disjoint from those of $f$. Thus the result follows from the case of broken configurations with asymptotics disjoint from $f$.

Given these preliminaries, we proceed to prove Lemma \ref{outside}.
Denote the components of the building $F_0$ by
$f_0, f_1, \dots, f_K$.

As $F$ is a limit of curves $\highdeg^N$ having intersection number $1$ with fibre curves, if
$C$ is
a closed curve in $\mathcal F$ we have the intersection number
\[
	C \bullet f_0 + C \bullet f_1 + \cdots + C \bullet f_K = 1.
\]

By positivity of intersection each of the terms on the left here is a nonnegative integer, and so we conclude that exactly one component, which we call $f_0$, has $C \bullet f_0=1$ and all other components have $C \bullet f_i=0$.

By the analysis above, for $i \ge 1$ we have $C \bullet f_i=0$ for all curves $C$ in $\mathcal F$, including broken configurations. As $\mathcal F$ is a foliation, $f_i$ cannot be disjoint from all curves and so we conclude by positivity of intersection that it must cover one of them.
The intersection numbers for $f_0$ also follow from the above analysis.


Finally, by Proposition \ref{foliationF} we note that each curve in $\mathcal F$ has the same intersection
number with $E$ as its degree. This then holds for any cover of a curve in
$\mathcal F$, so for each $i=1, \dots, K$, $f_i \bullet \infline= f_i
\bullet E$. The high degree curves $\highdeg_N$ have
$\highdeg_N \bullet \infline = \highdeg_N \bullet E + 1$, and thus as intersection numbers are preserved in the limit we must have:
\begin{align*}
f_0 \bullet \infline &= f_0 \bullet E + 1 \\
d &= f_0 \bullet \infline + f_1 \bullet \infline + \dots + f_K \bullet \infline.
\end{align*}
The result now follows from the non-negativity of each of these intersections.

\end{proof}

As in Lemma \ref{outside}, $f_1, f_2, \dots, f_K$ denote the components of $F_0$ which cover leaves of $\mathcal F$. Then their
asymptotic limits must be closed Reeb orbits corresponding to geodesics
in class $(a, 0)$ for $0 \ne a \in \Z$. As our building has genus $0$ and there are no null-homologous
Reeb orbits,  the homology classes of the asymptotic limits of $f_0$ must match with the asymptotic limits of the other $f_i$ up to orientation and so
must all be in classes $(a,0)$ also.

\begin{lemma} \label{allsame}
Either all punctures of $f_0$ are asymptotic to geodesics in classes $(k,0)$ for $k >0$, or all punctures are asymptotic to geodesics in classes $(k,0)$ for $k <0$.
\end{lemma}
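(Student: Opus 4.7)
My plan is to prove Lemma~\ref{allsame} by contradiction, using the Siefring-Wendl intersection identity
\[
	f_0 \star C_0 + f_0 \star C_1 = 1
\]
established in Lemma~\ref{outside}. Suppose, toward a contradiction, that $f_0$ has a puncture $p_+$ asymptotic to a $(k,0)$ orbit with $k > 0$ and another puncture $p_-$ asymptotic to an $(m,0)$ orbit with $m < 0$. I will show each sign type individually forces the corresponding extended intersection number to be at least one, so that their sum is at least two, contradicting the identity above.

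By the homotopy invariance part of Theorem~\ref{T:intersectionNumber}, applied to the $S^1$-families of degree $0$ and degree $1$ leaves of $\mathcal F$, the quantities $n_0 \coloneqq f_0 \star C_0$ and $n_1 \coloneqq f_0 \star C_1$ do not depend on the choice of leaf in each family. To bound $n_0 \ge 1$, I would evaluate on the leaf $C_0(\alpha_+)$ whose asymptotic $(1,0)$ orbit sits at the same position $\alpha_+ \in \partial D(2)$ as the puncture $p_+$, so that $p_+$ is asymptotic to the $k$-fold iterate of the asymptotic orbit of $C_0(\alpha_+)$. As described after the statement of Theorem~\ref{T:asymptotic}, I reparametrize the two ends by factoring through the $k$-fold covering map so that both converge to the same $k$-fold multiply covered orbit, and express their difference in normal coordinates as $\e^{\lambda s}(e(t) + r(s,t))$ with $\lambda > 0$ a positive eigenvalue of the associated asymptotic operator. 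Exactly as in the proof of Lemma~\ref{L:uniquePlane}, the winding number computation of Appendix~\ref{A:RiemannRoch} shows that $e(t)$ has winding at least one with respect to the framing coming from the Morse-Bott $S^1$-family of orbits. Pushing $C_0(\alpha_+)$ off in the Morse-Bott direction therefore introduces at least one additional geometric intersection with $f_0$ near $p_+$, and positivity of intersection for distinct $J$-holomorphic curves makes the remaining geometric intersections contribute non-negatively, so $n_0 \ge 1$. The symmetric argument with $C_1(\alpha_-)$ and $p_-$ gives $n_1 \ge 1$, and combining yields $n_0 + n_1 \ge 2$, contradicting $n_0 + n_1 = 1$.

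The step I expect to require the most care is justifying the winding bound at a general $(k,0)$ orbit with $k \ge 1$: the appendix calculation provides this for the simple orbits, but one must check that it survives pullback under the $k$-fold cover, i.e.\ that positive eigenvectors of the asymptotic operator at the covered orbit still wind at least once with respect to the Morse-Bott framing pulled back through that cover. Mild degeneracies (e.g.\ $\alpha_+ = \alpha_-$, or several punctures of $f_0$ sharing a single position on $\partial D(2)$) only strengthen the bounds on $n_0$ and $n_1$, so they do not affect the argument.
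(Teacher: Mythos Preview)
Your proposal is correct and follows essentially the same approach as the paper: pick the leaf $C_0$ (respectively $C_1$) whose simple asymptotic orbit underlies the given $(k,0)$ (respectively $(-k,0)$) end of $f_0$, apply the asymptotic convergence formula of Theorem~\ref{T:asymptotic} to the $k$-fold cover, use the winding bound from Appendix~\ref{A:RiemannRoch} to see that the Morse--Bott push-off creates at least one extra intersection, and conclude via $f_0 \star C_0 + f_0 \star C_1 = 1$ from Lemma~\ref{outside}. The paper phrases the conclusion slightly differently (a positive end forces $f_0 \star C_0 = 1$, $f_0 \star C_1 = 0$, and vice versa) rather than as an explicit contradiction, and it makes explicit that $f_0$ is somewhere injective (since $d'$ and $d'-1$ are coprime) to guarantee geometric distinctness from $C_0$, but the substance is the same.
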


\begin{proof}
Suppose that $f_0$ has a puncture asymptotic to a closed Reeb orbit in class
$(k,0)$, $k > 0$. Denote this orbit by $\gamma^k$.
Then by Lemma \ref{L:uniquePlane} there exists a plane in the family $C_0$ asymptotic
to the underlying simple cover $\gamma$ of the same orbit. We will now estimate
$f_0 \star C_0$ using this specific plane.
Note that $f_0$ is somewhere injective since its intersection numbers
with $\infline$ and with $E$ are coprime, see Lemma \ref{outside}.
The plane $C_0$ is also somewhere injective, and the two curves have geometrically distinct
images. Therefore, $f_0 \bullet C_0$ is well-defined and non-negative. By the
asymptotic convergence formula for holomorphic curves
in Theorem \ref{T:asymptotic},
the two curves do not have any intersection near the puncture.
Furthermore, near their asymptotic limits, the curves $f_0$ and the $k$-fold cover
of the restriction of $C_0$ to a neighbourhood of $\gamma$
can be written as graphs
over $\gamma^k$.
In other words, there exist maps
$U_i \colon (-\infty, R] \times S^1 \to (\gamma)^*\xi, i=1,2$ with
  $U_i(s,t) \in \xi(\gamma^k(Tt))$ so that the images of $f_0$ and of $C_0$ restricted
  to a sufficiently small neighbourhood of their punctures, are described
  respectively by
\[
	 (-Ts, \exp_{\gamma^k(T t)} U_i(s,t)), i=1,2.
\]
Since the curves are geometrically distinct,
$U_1 - U_2$ does not vanish, and so
there exists
  a positive eigenvalue $\lambda > 0$ of the asymptotic operator
  $A_{\gamma^k, J}$ and a corresponding eigenvector so that
  \[
	  U_1(s,t) - U_2(s,t) = \e^{\lambda s} ( e(t) + r(s,t)),
  \]
  where $r$ decays exponentially fast.
As computed in Appendix \ref{A:RiemannRoch}, the eigenvectors
corresponding to positive eigenvalues have winding number at least one
with respect to the framing given by the $S^1$ family of orbits. Pushing
$C_0$ off in the constant direction then introduces at least $1$
additional intersection point, giving $f_0 \star C_0 \ge 1$. By Lemma \ref{outside} this implies that $f_0 \star C_0 = 1$ and $f_0 \star C_1 =0$.

By applying the same argument, if $f_0$ has a puncture asymptotic to a
closed Reeb orbit in class $(-k,0)$, $k>0$, it follows that $f_0 \star
C_1 = 1$ and $f_0 \star C_0 =0$.

Finally, by the homotopy invariance of the extended
intersection number, either all punctures of $f_0$ asymptote to Reeb
orbits in classes $(k,0)$ for $k \ge 1$, or they all asymptote to Reeb
orbits in classes $(-k, 0)$ for $k \ge 1$ as required.
\end{proof}

For components in $T^* L$ and $\Sigma \times \RR$ we have the following.

\begin{lemma} \label{inside}
The components of $F$ in $T^* L$ and $\Sigma \times \RR$ are all multiple covers of cylinders asymptotic to orbits of type $(1,0)$ or $(-1,0)$ as described in Proposition \ref{otherlimits}.
\end{lemma}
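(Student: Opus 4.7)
The plan is to prove Lemma \ref{inside} in two steps. First, I would show that every asymptotic orbit appearing anywhere in the subbuilding $G \subset F$ consisting of components in $T^*L \cup (\Sigma \times \RR)$ lies in a homology class of the form $(k, 0)$ with $k \ne 0$. Then I would show that with this restriction every component of $G$ must in fact be a branched cover of one of the standard cylinders supplied by Proposition \ref{otherlimits} and Corollary \ref{symmetricfoliation}.

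For the first step, the restriction already holds at the top interface between $F_0$ and $G$: by Lemma \ref{outside} together with Proposition \ref{foliationF}, each covering component $f_i$ of $F_0$ with $i \ge 1$ is a branched cover of the plane $C_0$ or $C_1$, whose asymptotics are in classes $(\pm 1, 0)$, and by Lemma \ref{allsame} the distinguished component $f_0$ also has all its asymptotic orbits in classes $(a, 0)$. To propagate the constraint down through the internal matchings, I would compute for each component $g$ of $G$ the Siefring intersection $g \star T_{(0,1)}$ with a trivial cylinder $T_{(0,1)}$ in $\Sigma \times \RR$ over a $(0,1)$ orbit. The total extended intersection $F \star T_{(0,1)}$ vanishes by homotopy invariance, since $T_{(0,1)}$ can be displaced to be asymptotically disjoint from all top-level orbits. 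The winding computation in Appendix \ref{A:RiemannRoch} shows that any asymptotic end of $g$ in a class $(k, l)$ with $l \ne 0$ forces a strictly positive contribution to the Morse-Bott pushoff that defines $g \star T_{(0,1)}$, so non-negativity of the terms in the Siefring decomposition forbids any such end.

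For the second step, fix a component $g$ of $G$ and let $\sigma$ be a standard cylinder from Proposition \ref{otherlimits} whose asymptotic orbit lies in the same $S^1$ Morse-Bott family of simple $(\pm 1, 0)$ orbits as one of the ends of $g$. Since $\highdeg^N \bullet F_{\mathrm{fiber}}^N = 1$ for any fiber-class curve $F_{\mathrm{fiber}}^N$, applying the inequality at the end of Theorem \ref{T:intersectionNumber} to the limiting buildings yields $1 \ge I_{\mathrm{top}} + \sum_j g_j \star \sigma$, where $I_{\mathrm{top}}$ denotes the sum of Siefring intersections at the $X \setminus L$ level. By Lemma \ref{outside} one has $I_{\mathrm{top}} = 1$, so $g_j \star \sigma \le 0$ for every $j$, and non-negativity of Siefring intersection between geometrically distinct curves forces $g_j \star \sigma = 0$. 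The asymptotic convergence theorem (Theorem \ref{T:asymptotic}) together with the winding analysis in Appendix \ref{A:RiemannRoch} then shows that when $g_j$ shares an asymptotic limit with $\sigma$ the intersection $g_j \star \sigma$ cannot vanish unless $g_j$ and $\sigma$ coincide geometrically; hence $g_j$ must factor through $\sigma$ as a branched cover.

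The main obstacle will be step 1: propagating the orbit-class restriction across several levels of breaking requires extending the reference cylinder $T_{(0,1)}$ across cotangent and symplectization levels in a consistent way and carefully checking that each asymptotic end of $g$ in a class with nonzero $l$-component does contribute strictly to the Morse-Bott pushoff intersection, which in turn relies on the explicit computation of eigenvectors of the asymptotic operator for $T^3$ done in the appendix.
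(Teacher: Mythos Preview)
Your Step 1 does not go through as written. The trivial cylinder $T_{(0,1)}$ over a $(0,1)$ orbit is not the limit of any closed curve in $X$ --- by Proposition \ref{otherlimits} the fiber curves break only along $(\pm 1,0)$ orbits --- so neither the homotopy invariance of Siefring's number nor the inequality in Theorem \ref{T:intersectionNumber} provides a reference value for $\sum_j g_j \star T_{(0,1)}$. Moving $T_{(0,1)}$ within its Morse--Bott $S^1$ family keeps it in the $(0,1)$ class, which was already disjoint from every top-level $(k,0)$ class; this displacement changes nothing and certainly does not force the geometric intersections with the $g_j$ to vanish. Your positivity claim is also off: an end of $g$ in a class $(k,l)$ with $l\neq 0$ lies (unless $k=0$) in a \emph{different} Morse--Bott family from the $(0,1)$ orbits of $T_{(0,1)}$, so there is no asymptotic-winding comparison available and no automatic positive contribution to the push-off count.

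More to the point, Step 1 is unnecessary. The paper bypasses it by using that the standard cylinders of Proposition \ref{otherlimits} \emph{foliate} $T^*L$ and $\Sigma\times\RR$. If a component $F_1$ of $F$ in these pieces fails to cover a leaf of this foliation, it has an isolated intersection with some leaf $C$, which one may choose with asymptotics disjoint from those of $f_0$. Fixing a point $p\in C$ and passing to the limit of the $J^N$-fiber spheres $Q^N$ through $p$ produces a fiber building $G$ that contains $C$ and whose top level is a broken configuration $C_0\cup C_1$ whose asymptotics avoid those of $f_0$. Lemma \ref{outside} gives one isolated intersection of $F$ with $G$ on $f_0$, and $F_1\cap C$ gives a second; positivity of intersection then forces $\highdeg^N\bullet Q^N\ge 2$ for $N$ large, contradicting $\highdeg^N\bullet Q^N=1$. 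Your Step 2 is aiming at precisely this contradiction, but as written it leans on Step 1 (to guarantee that $g$ has an end whose underlying simple orbit is $(\pm1,0)$) and it does not explain why the chosen $\sigma$ actually sits inside a limiting fiber building with the level alignment the Siefring inequality requires. The foliation argument removes both difficulties at once.
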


\begin{proof}
We argue by contradiction and suppose that a component $F_1$ of $F$ does not cover such a cylinder. Then as the cylinders foliate $T^* L$ and $\Sigma \times \RR$ we can find a cylinder $C$ which intersects $G$ nontrivially. Further, we may assume that the asymptotic limits of $C$ are disjoint from the asymptotic limits of the $f_0$ from Lemma \ref{outside}. Now fix a point on $C$ and take a limit of the $J^N$ holomorphic spheres $Q^N$ in the fiber class passing through $p$. Denote the limiting building by $G$. By Proposition \ref{otherlimits} one component of the limit in $T^* L$ or $\Sigma \times \RR$ coincides with $C$, and other components form a broken configuration in $X \setminus L$ with asymptotic limits distinct from $f_0$. By Lemma \ref{outside} the curve $f_0$ has intersection number $1$ with this broken configuration.

The building $F$ now has two isolated intersections with $G$, one on $F_1$ and the other on $f_0$. By positivity of intersection there are then at least two isolated intersections between $\highdeg^N$ and $Q^N$ for large $N$, and this is a contradiction as $\highdeg^N$ has intersection number $1$ with fiber curves.
\end{proof}

\noindent{\bf Completion of the proof of Theorem \ref{main}.}

By Lemma \ref{inside} the components of $F$ in $T^* L$ and $\Sigma \times \RR$ consist only of covers of cylinders and by the choice of our point constraints $\{ q_i \}$ at the start of this section there exist at least $2d$ such cylinders. Suppose that such a cylinder has ends which are not asymptotic to any end of $f_0$. Then they are asymptotic to the ends of $f_i$ for $i \ge 1$, that is, asymptotic to curves of $\mathcal F$. In particular, as one end of the cylinder is asymptotic to a geodesic in the class $(-1,0)$, this end must match with a component of $F$ which is a cover of a curve of type $C_1$ from Proposition \ref{foliationF}. Suppose there are $m$ such cylinders not asymptotic to $f_0$. Then $f_0$ must have at least $2d-m$ negative ends and degree $d' \le d-m$ (since the curves of type $C_1$ asymptotic to the $m$ cylinders have total degree at least $m$).

Now, if the asymptotic
limits of $f_0$ are all of the form $(k,0)$ for $k \ge 1$, then since the
building $F$ has genus $0$ there must be at least $2d-m$ planes in $F$ that are covers
of degree $1$ planes in $\mathcal F$ and asymptotic to limits of $f_0$ with the opposite orientation. Combined with the $m$ curves asymptotic to other orbits we find $2d$ covers of curves of type $C_1$ in $F$.  The degree of the building $F$
 is only $d$, however, so this is a contradiction. Hence, by Lemma \ref{allsame} all negative ends of $f_0$ must be in classes
 $(-k,0)$, and we must have $f_0 \star C_0 = 0$
and $f_0 \star C_1 = 1$.



Let $s \ge 2d-m$ be the total number of negative ends of $f_0$.
Let $1 \le d' \le d-m$ be the intersection number of $f_0$ with the line at
infinity (i.e.~its degree).
Recall that the intersection of $f_0$
with the exceptional divisor is $f_0 \bullet E = d' - 1$.
The area of $f_0$ then satisfies
\begin{align*}
0 < \text{Area}(f_0) & \le d' R - (d'-1) - s\\
	&= d'(R-2) + 1 + d' - s \\
	&\le d(R-2) + 1 + (d-m) - (2d-m)
\end{align*}
where the inequality uses $d' \le d-m \le d$ and $s \ge 2d-m$.
Rearranging, we get
\[
R-2 \ge 1-\frac{1}{d}.
\]
Letting $d \to \infty$, this inequality gives a contradiction for any $R<3$ and our proof is complete.

\subsubsection*{Acknowledgements}

The authors would like to thank Dusa McDuff for pointing out the interest of the embedding problem for $P(1,2)$.
The second author would like to thank Klaus Mohnke,
Richard Siefring and Chris Wendl for helpful conversations, and
Dietmar Salamon for raising questions that improved the exposition.
His work was partially supported by the ERC Starting Grant of
Fr\'ed\'eric Bourgeois StG-239781-ContactMath
and by the ERC Starting Grant of Vincent Colin Geodycon.

\appendix

\section{Morse-Bott Riemann-Roch} \label{A:RiemannRoch}

First, we state the index formula for a real-linear Cauchy-Riemann-type
operator acting on sections of a complex line bundle over a punctured
Riemann surface $(\dot \Sigma, j)$ (where $\dot \Sigma = \Sigma
\setminus \Gamma$, a finite collection of punctures).  We define
$W^{1,p,\delta}(\dot \Sigma, E)$ to be the weighted space of sections
with exponential decay (for $\delta > 0$) at the punctures.
(See the discussion preceding Theorem \ref{T:asymptotic} for a review of
the definition of an asymptotic operator.)

Then, we reformulate the index theorem from \cite{MatthiasThesis}:
\begin{theorem}
\label{T:RiemannRoch} A real Cauchy--Riemann-type operator $T:
W^{1,p,\delta}(\dot \Sigma, E) \rightarrow L^{p,\delta}(\dot
\Sigma, \Omega^{0,1}(E))$ is Fredholm if and only if all of its
$\delta$--perturbed asymptotic operators are non-degenerate.

For any choice of cylindrical asymptotic trivialization $\Phi$, the index is given by:
\[
\Ind = n \chi( \dot \Sigma) + \sum_{z \in \Gamma_{+}} \CZ^{\Phi}( A_{z} + \delta_{z} ) -
\sum_{z \in \Gamma_{-}} \CZ^{\Phi}(A_{z} - \delta_{z}) + 2 c^{\Phi}_{1}( E ),
\]
where $n$ denotes the (complex) rank of $E$, $\chi(\dot \Sigma)$
is the Euler characteristic of the punctured surface, $\CZ(A)$
is the Conley-Zehnder index  of the asymptotic operator $A$, and
$c_{1}^{\Phi}(E)$ is the first Chern class of the bundle $E$ relative to
the asymptotic trivialization data $\Phi$, paired with the fundamental
class of $[\Sigma]$.

In particular then, for a punctured holomorphic curve $u$ in an almost complex
symplectic manifold $(V, J)$
with symplectization ends,
considered modulo domain reparametrizations, the index is given by
\[
	\Ind(u) = - \chi(\dot \Sigma) +
	\sum_{z \in \Gamma_{+}} \CZ^{\Phi}( A_{z} + \delta_{z} ) -
	\sum_{z \in \Gamma_{-}} \CZ^{\Phi}(A_{z} - \delta_{z})
	+ 2 c^{\Phi}_{1}( u^*TV )
\]
where, now, the $\Phi$ are given by trivializations of the contact structure
over the asymptotic limits of $u$ and $c^{\Phi}_{1}( u^*TV )$ is, as before,
the first Chern class of the bundle relative to the asymptotic trivialization
given by $\Phi$, paired with the fundamental class of $[\Sigma]$.

\end{theorem}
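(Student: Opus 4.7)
The plan is to reduce the Morse-Bott statement to the non-degenerate Riemann-Roch formula proved in \cite{MatthiasThesis} by using the exponential weight to shift the spectra of the asymptotic operators away from zero.

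First I would establish the Fredholm criterion. Near each puncture $z$, fix cylindrical coordinates $(s,t)$ with $s \to +\infty$ at a positive puncture and $s \to -\infty$ at a negative one; in these coordinates $T$ takes the asymptotic form $\partial_s + J_0 \partial_t + S(s,t)$, where $S(s,t)$ converges to the symmetric operator representing $A_z$. The weighted Sobolev space $W^{1,p,\delta}(\dot\Sigma, E)$ is identified, via $\eta \mapsto e^{\delta |s|}\eta$ near each puncture, with the standard $W^{1,p}$ space, and under this identification $T$ is conjugated to an operator $\widetilde T$ whose asymptotic operator at $z$ is $A_z + \delta_z$ (at positive punctures) or $A_z - \delta_z$ (at negative punctures). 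The standard Fredholm theory of Cauchy-Riemann operators with cylindrical ends then gives that $\widetilde T$, and hence $T$, is Fredholm precisely when each shifted asymptotic operator has trivial kernel, and the Fredholm indices of $T$ and $\widetilde T$ coincide.

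Once Fredholm, the index formula for $T$ follows by applying the non-degenerate Riemann-Roch theorem of \cite{MatthiasThesis} directly to $\widetilde T$: the shifted operators $A_z \pm \delta_z$ are non-degenerate by hypothesis, so Schwarz's formula produces exactly the expression stated for $T$. For the corollary on a punctured pseudoholomorphic curve $u$, I would apply the line-bundle formula to $E = u^*TV$ of complex rank $n$ to compute the index of the linearised Cauchy-Riemann operator $D_u$, and then pass to the index modulo domain reparametrization by adding the virtual dimension of varying the complex structure on $\dot\Sigma$ and subtracting the dimension of the conformal automorphism group of the domain. In the four-dimensional setting ($n = 2$) of the paper this adjustment replaces the $n\chi(\dot\Sigma)$ term by $(n-3)\chi(\dot\Sigma) = -\chi(\dot\Sigma)$, yielding the stated formula. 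The main technical point, and the step most likely to require care, is matching the asymptotic trivialisations: one must verify that the Conley-Zehnder indices produced by Schwarz's formula with trivialisation $\Phi$ agree with those of $A_z \pm \delta_z$ computed via $\Phi$, which is a routine compatibility check once sign conventions for the weight and for the Reeb flow are fixed.
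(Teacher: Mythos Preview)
Your reduction to Schwarz's non-degenerate Riemann--Roch via conjugation by the exponential weight is the standard and correct derivation of this formula, and your account of the passage from the operator index to the moduli-space index is fine.  However, there is nothing in the paper to compare it to: the paper does not prove Theorem~\ref{T:RiemannRoch} at all.  The theorem is introduced with the sentence ``Then, we reformulate the index theorem from \cite{MatthiasThesis}'' and is simply stated as a citation; the remainder of the appendix consists of computations \emph{using} the formula (the asymptotic operator on $T^3$, its eigenvalue windings, and the relative Chern numbers needed for Propositions~\ref{out}--\ref{in}), not a proof of it.  So your proposal supplies an argument where the paper is content to cite one.
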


Recall that for a 2-dimensional contact structure, there is an alternative
description of the Conley-Zehnder index in terms of winding numbers of
eigenvectors of the corresponding asymptotic operators \cite{hofi}.

In $T^3$, the contact structure is globally trivial.
Given explicitly, writing $T^3 = (\R / 2\pi \Z)^3$ with circle valued
coordinates $\theta, q_1, q_2$, we have $\alpha = \cos(\theta) dq_1 + \sin(\theta) dq_2$ and a trivialization of $\xi$ by $\sin(\theta) \partial_{q_1} - \cos(\theta) \partial_{q_2}$ and $\partial_\theta$.
With respect to this trivialization, the asymptotic operator
at a $T$-periodic orbit is
\[
	- J_0 \frac{d}{dt} - T \begin{pmatrix} 0 & 0 \\ 0 & 1 \end{pmatrix}
\]


We note that the spectrum includes $0$ and $-T$. Each of these
eigenvalues has multiplicity $1$ and has eigenvectors
of winding $0$ with respect to our trivialization.  The perturbed operator
corresponding to a negative puncture allowed to move in the Morse-Bott
family is the one coming from allowing asymptotic exponential growth.
This then gives an even orbit and thus Conley-Zehnder index $0$.
Similarly, for a positive puncture allowed to move in the Morse-Bott family,
the Conley-Zehnder index is then $1$.

For a curve contained in $T^*T^2$ or in $\R \times \Sigma$, the Chern number of
$u^*T$ with respect to this trivialization is $0$, since our chosen
trivialization extends.

We now need to compute $c_1(u^*TX)$ for a punctured sphere in $X \setminus
L$, of degree $d$ and of intersection $e$ with the exceptional divisor,
and asymptotic ends on orbits in classes $(k_i, l_i)$.  Observe that
this is additive with respect to connect sums, so we only need to compute
this for a curve with $d=e=0$, since the first Chern number of $TX|_{\CC
P^1(\infty)}$ is $3$ and the first Chern number of $E$ is $1$ (and a
closed curve of degree $d$ and intersection $e$ with $E$ represents
the class $d[\CC P^1] - e[E]$).  Any such curve can now be deformed
and decomposed to a collection of $s$ disks of degree $0$ and $e=0$,
each with boundary on a geodesic of class $(k_i, l_i)$, $i=1, \dots,
s$, and thus these may be taken to be model disks contained in $\C^2$
with boundary on $L$.  Finally, this trivialization of $\xi$ induces
a trivialization of $TT^*T^2|_{\Sigma}$, and thus of $\Lambda^2_\C
TT^*T^2|_{\Sigma}$. A straightforward computation shows gives that this
trivialization has winding number $k_i+l_i$ with respect to the obvious
trivialization of $\Lambda^2 T\C^2$, and thus the relative Chern number
is $k_i+l_i$, as claimed.

\begin{bibdiv}
\begin{biblist}

\bib{BEHWZ}{article}{
      author={Bourgeois, F.},
      author={Eliashberg, Y.},
      author={Hofer, H.},
      author={Wysocki, K.},
      author={Zehnder, E.},
       title={Compactness results in symplectic field theory},
        date={2003},
        ISSN={1465-3060},
     journal={Geom. Topol.},
      volume={7},
       pages={799\ndash 888},
         url={http://dx.doi.org/10.2140/gt.2003.7.799},
      review={\MR{2026549 (2004m:53152)}},
}

\bib{BourgeoisThesis}{book}{
      author={Bourgeois, Frederic},
       title={A {M}orse-{B}ott approach to contact homology},
   publisher={ProQuest LLC, Ann Arbor, MI},
        date={2002},
        ISBN={978-0493-62828-8},
  url={http://gateway.proquest.com/openurl?url_ver=Z39.88-2004&rft_val_fmt=inf%
o:ofi/fmt:kev:mtx:dissertation&res_dat=xri:pqdiss&rft_dat=xri:pqdiss:3048497},
        note={Thesis (Ph.D.)--Stanford University},
      review={\MR{2703292}},
}

\bib{ekehof}{article}{
 author={Ekeland, I.},
 author={Hofer, H.},
 title={Symplectic topology and
Hamiltonian dynamics II},
 date={1990},
 journal={Math. Z.},
 volume={203},
 pages={553\ndash 567}
}

\bib{hindivrii}{article}{
 author={Hind, R.},
 author={Ivrii, A.},
 title={Ruled 4-manifolds and isotopies of symplectic surfaces},
 date={2010},
 journal={Math. Z.},
 volume={265},
 pages={639\ndash 652}
}

\bib{hofi}{article}{
      author={Hofer, H.},
      author={Wysocki, K.},
      author={Zehnder, E.},
       title={Properties of pseudo-holomorphic curves in symplectisations.
  {II}. {E}mbedding controls and algebraic invariants},
        date={1995},
        ISSN={1016-443X},
     journal={Geom. Funct. Anal.},
      volume={5},
      number={2},
       pages={270\ndash 328},
         url={http://dx.doi.org/10.1007/BF01895669},
      review={\MR{1334869 (97e:58030)}},
}

\bib{hofa}{article}{
      author={Hofer, H.},
      author={Wysocki, K.},
      author={Zehnder, E.},
       title={Properties of pseudoholomorphic curves in symplectisations. {I}.
  {A}symptotics},
        date={1996},
        ISSN={0294-1449},
     journal={Ann. Inst. H. Poincar\'e Anal. Non Lin\'eaire},
      volume={13},
      number={3},
       pages={337\ndash 379},
         url={http://dx.doi.org/10.1016/S0294-1449(98)80034-6},
      review={\MR{1395676 (97e:58029)}},
}

\bib{hoff}{article}{
   author={Hofer, H.},
   author={Wysocki, K.},
   author={Zehnder, E.},
   title={Properties of pseudoholomorphic curves in symplectizations. III.
   Fredholm theory},
   conference={
      title={Topics in nonlinear analysis},
   },
   book={
      series={Progr. Nonlinear Differential Equations Appl.},
      volume={35},
      publisher={Birkh\"auser},
      place={Basel},
   },
   date={1999},
   pages={381--475},
   review={\MR{1725579 (2001e:53092)}},
}

\bib{hwzfol}{article}{
      author={Hofer, H.},
      author={Wysocki, K.},
      author={Zehnder, E.},
       title={Finite energy foliations of tight three-spheres and {H}amiltonian
  dynamics},
        date={2003},
        ISSN={0003-486X},
     journal={Ann. of Math. (2)},
      volume={157},
      number={1},
       pages={125\ndash 255},
         url={http://dx.doi.org/10.4007/annals.2003.157.125},
      review={\MR{1954266 (2004a:53108)}},
}

\bib{HutchingsCapacities}{article}{
      author={Hutchings, Michael},
       title={Quantitative embedded contact homology},
        date={2011},
        ISSN={0022-040X},
     journal={J. Differential Geom.},
      volume={88},
      number={2},
       pages={231\ndash 266},
         url={http://projecteuclid.org/getRecord?id=euclid.jdg/1320067647},
      review={\MR{2838266}},
}

\bib{mike}{article}{
      author={Hutchings, Michael},
      author={Sullivan, Michael},
       title={Rounding corners of polygons and the embedded contact homology of
  {$T^3$}},
        date={2006},
        ISSN={1465-3060},
     journal={Geom. Topol.},
      volume={10},
       pages={169\ndash 266},
         url={http://dx.doi.org/10.2140/gt.2006.10.169},
      review={\MR{2207793 (2006k:53150)}},
}

\bib{msa}{book}{
      author={McDuff, Dusa},
      author={Salamon, Dietmar},
       title={{$J$}-holomorphic curves and symplectic topology},
      series={American Mathematical Society Colloquium Publications},
   publisher={American Mathematical Society},
     address={Providence, RI},
        date={2004},
      volume={52},
        ISBN={0-8218-3485-1},
      review={\MR{2045629 (2004m:53154)}},
}

\bib{mcduff}{article}{
      author={McDuff, Dusa},
       title={The Hofer conjecture on embedding symplectic ellipsoids},
        date={2011},
     journal={J. Differential Geom.},
      volume={88},
      number={3},
       pages={519\ndash 532},

}

\bib{SchlenkBook}{book}{
   author={Schlenk, Felix},
   title={Embedding problems in symplectic geometry},
   series={de Gruyter Expositions in Mathematics},
   volume={40},
   publisher={Walter de Gruyter GmbH \& Co. KG, Berlin},
   date={2005},
   pages={x+250},
   isbn={3-11-017876-1},
}

\bib{MatthiasThesis}{book}{
      author={Schwarz, Matthias},
       title={Cohomology operations from $s^1$--cobordisms in {F}loer
  {H}omology},
        date={1995},
         url={http://www.math.uni-leipzig.de/~schwarz/diss.pdf},
        note={Thesis (Ph.D.)--ETH Z{\"u}rich},
}

\bib{SiefringAsymptotics}{article}{
      author={Siefring, Richard},
       title={Relative asymptotic behavior of pseudoholomorphic
  half-cylinders},
        date={2008},
        ISSN={0010-3640},
     journal={Comm. Pure Appl. Math.},
      volume={61},
      number={12},
       pages={1631\ndash 1684},
         url={http://dx.doi.org/10.1002/cpa.20224},
      review={\MR{2456182 (2009k:32029)}},
}

\bib{Siefring}{article}{
      author={Siefring, Richard},
       title={Intersection theory of punctured pseudoholomorphic curves},
        date={2011},
        ISSN={1465-3060},
     journal={Geom. Topol.},
      volume={15},
      number={4},
       pages={2351\ndash 2457},
         url={http://dx.doi.org/10.2140/gt.2011.15.2351},
      review={\MR{2862160 (2012m:53199)}},
}

\bib{SiefringWendl}{unpublished}{
      author={Siefring, Richard},
      author={Wendl, Chris},
       title={Pseudoholomorphic curves, intersections and {M}orse-{B}ott
  asymptotics},
        note={In preparation.},
}

\bib{wendl}{article}{
      author={Wendl, Chris},
       title={Automatic transversality and orbifolds of punctured holomorphic
  curves in dimension four},
        date={2010},
        ISSN={0010-2571},
     journal={Comment. Math. Helv.},
      volume={85},
      number={2},
       pages={347\ndash 407},
         url={http://dx.doi.org/10.4171/CMH/199},
      review={\MR{2595183 (2011g:32037)}},
}

\bib{wendlFillingsT3}{article}{
      author={Wendl, Chris},
       title={Strongly fillable contact manifolds and {$J$}-holomorphic
  foliations},
        date={2010},
        ISSN={0012-7094},
     journal={Duke Math. J.},
      volume={151},
      number={3},
       pages={337\ndash 384},
         url={http://dx.doi.org/10.1215/00127094-2010-001},
      review={\MR{2605865 (2011e:53147)}},
}

\end{biblist}
\end{bibdiv}

\end{document}